\documentclass[11pt]{article}
\usepackage{amssymb}
\usepackage{amsfonts}
\usepackage{amsmath}
\usepackage{mathrsfs}
\usepackage{graphicx}
\usepackage{amsbsy}
\usepackage{theorem}
\usepackage{color}
\usepackage{hyperref}
\usepackage{tikz}
\usepackage[normalem]{ulem}
\usepackage{mathtools}
%
 \textheight 23truecm
 \textwidth 16.7truecm
 \footskip 1.3truecm
 \voffset=-2truecm
 \hoffset=-2truecm

 
\newtheorem{theorem}{Theorem}[section]
\newtheorem{lemma}[theorem]{Lemma}

\newtheorem{definition}[theorem]{Definition}

\newenvironment{proof}{\noindent{\bf Proof.~}}
{{\mbox{}\hfill {\small \fbox{}}\\}}
\def\qed{\mbox{}\hfill {\small \fbox{}}\\}
%
\newcommand{\RR}{\mathbb{R}}
\newcommand{\R}{\mathbb{R}}
\newcommand{\ve}{\varepsilon}




\begin{document}

\title{On the  Hilbert Method in the Kinetic Theory of Multicellular Systems: Hyperbolic Limits and Convergence Proof\protect\thanks{Dedicated to Abdelghani Bellouquid who prematurely passed away on August 2015.}}

\author{Mohamed Khaladi, Nisrine Outada\thanks{Universit\'e Cadi Ayyad, Facult\'e des Sciences Semlalia, LMDP, UMMISCO (IRD- UPMC), Marrakech 40000, B.P. 2390, Maroc}
and 
Nicolas Vauchelet\thanks{Universit\'e Paris 13, Sorbonne Paris Cit\'e, Laboratoire Analyse G\'eom\'etrie et Applications, CNRS UMR 7539, 93430 Villetaneuse, France}
}


\maketitle

\begin{abstract}
  We consider a system of two kinetic equations modelling a multicellular system : The first equation governs the dynamics of cells, whereas the second kinetic equation governs the dynamics of the chemoattractant.
  For this system, we first prove the existence of global-in-time solution. The proof of existence relies on a fixed point procedure after establishing some a priori estimates. Then, we investigate the hyperbolic limit after rescaling of the kinetic system. It leads to a macroscopic system of Cattaneo type. The rigorous derivation is established thanks to a compactness method.
\end{abstract}

{\bf Keywords} Kinetic systems; Hyperbolic limit; Averaging lemma; hyperbolic limit.

\section{Introduction}
Our paper deals with derivation of models suitable to describe the behavior of multicellular systems from their description at the microscopic scale delivered by models derived by suitable generalizations of the kinetic theory. This problem can be viewed as a possible generalization of the celebrated sixth Hilbert problem~\cite{[H902]} which has been object of several interesting contributions in the classical kinetic theory. The literature in the field is documented in the review papers by  Perthame~\cite{[PE04]} and Saint Raymond~\cite{[S09]}.  As it is known, the time-space scaling can be referred to the so called parabolic and hyperbolic limits or equivalently low and high field limits. The parabolic limit leads to a drift--diffusion type system (or reaction--diffusion system) in which the diffusion processes dominate the behavior of the solutions. The hyperbolic limit leads to models where the influence of the diffusion terms is of lower (or equal) order of magnitude in comparison with other convective or interaction terms. Accordingly, different macroscopic models are obtained corresponding to different scaling assumptions.

The derivation of macroscopic equations from the kinetic theory description was introduced for dispersed biological entities in the pioneer paper~\cite{[ODA88]} and subsequently developed by various authors as witnessed in the bibliography of the survey~\cite{[BBNS12]}. An interesting application has been the derivation of Keller-Segel type models. A broad bibliography has been produced on this challenging topic as reviewed in Sections 5 and 6 of the survey~\cite{[BBTW15]}. The rationale of the approach proposed in~\cite{[ODA88]} consists in deriving a kinetic type model corresponding to the transport equation where the collision operator, namely the right hand side term of the kinetic equation, is perturbed by small stochastic term modeling a poisson velocity jump process. The small parameter corresponds to the entity of the perturbation, while an expansion of the dependent variable is developed in terms of powers of the said parameter. Very recent applications have been delivered in~\cite{[BBC16],[BC17],[OVAK16]}.

This approach is useful even when it developed at a formal level as it leads to interesting models at the macroscopic scale based on models of the dynamics at the microscopic scale rather than on artificial assumptions to close mass and momentum conservation equations. However, as it is known, most of the literature is developed at a formal level, where \textit{ad hoc} assumptions are needed to prove convergence of the aforementioned  power  expansions. The derivation of hyperbolic models involves additional problems on the convergence of Hilbert type expansions technically related to loss of regularity. Indeed, this is the main challenge of our paper which is tackled  in four sections. In more details, Section 2 presents a kinetic theory model of cross diffusion phenomena, where an hyperbolic scaling is is used to include propagation phenomena with finite speed; a binary mixture is accounted for and the statement of the initial value problem is delivered. Section 3 develops a qualitative analysis of the initial value problem and ends up with a local, in time, existence result and with the extension to arbitrarily large times. Finally, a convergence proof of an Hilbert type expansion is delivered in Section 4, however, due to technical difficulties, we restrict ourself to one dimension.

\section{A kinetic model of chemotaxis}
In this section we recall briefly the kinetic model presented in \cite{[OVAK16]}. For this aim, let $f(t,x,v)$ and $g(t,x,v)$ denotes, respectively, the density of cells and of the chemoattractant, depending on time $t$, position $x \in \mathbb{R}^d$ and velocity $v \in V \subseteq \mathbb{R}^d$. Then our kinetic model of chemotaxis reads:
\begin{equation}\label{SYST}
\begin{cases}
\partial_t f + v \cdot \nabla_{x}f= L(g,f),\vspace{0.25cm}\\
\partial_t g + v \cdot \nabla_{x}g= l(g)+G(f,g),
\end{cases}
\end{equation}
where the perturbation turning operators $L$ and $l$ model the dynamics of biological organisms by velocity-jump process, and are integral operators defined by 
\begin{equation}
L(f)=\int_{V}\left(T(v,v')f(t,x,v')-T(v',v)f(t,x,v)\right)dv',
\end{equation}
\begin{equation}\label{operatorl}
l(f)=\int_{V}\left(\mathcal{K}(v,v')f(t,x,v')-\mathcal{K}(v',v)f(t,x,v)\right)dv',
\end{equation}
while the operator $G(f,g)$, which describe proliferation/destruction interactions, is given by
\begin{equation}
G(f,g) = a \left< f \right> - b \left< g \right>, 
\end{equation}
where $a$, $b$ are real positive constants, and $\left<\cdot\right>$ stands for the $(v)$-mean of a function, i.e $\displaystyle{\left<h\right>:=\int_{V} h(t,x,v)dv}$ for $h \in L^1(V)$. The turning kernels $T(v,v')$ and $K(v,v')$ describe the reorientation of cells, i.e the random velocity changes from the previous velocity $v'$ to the new $v$.
Moreover, it is assumed that the set of admissible velocities $V$ is a spherically symmetric bounded domain of $\RR^d$ with $V\subset B_\nu$ (the ball of radius $\nu>0$).
This corresponds to the assumption that any individual of the population chooses a direction with bounded velocity.

As it is mentioned in the introduction, our contribution in this paper will be the rigorous derivation of a diffusive type model for movement of chemotaxis, obtained as a hydrodynamic limit of the kinetic model \eqref{SYST}. In detail, let us assume a hyperbolic scaling for the first population:
\begin{equation}
x \longrightarrow \varepsilon x, \quad t \longrightarrow \varepsilon t,
\end{equation} 
where $\varepsilon > 0$ is a small parameter which will be allowed to tend to zero. In this way we obtain from \eqref{SYST} the following scaled kinetic equation 
\begin{equation}\label{csks}
\begin{cases}
\partial_t f_\varepsilon + v \cdot \nabla_{x}f_\varepsilon= \frac{1}{\varepsilon} L(g_\varepsilon,f_\varepsilon),\vspace{0.25cm}\\
\partial_t g_\varepsilon + v \cdot \nabla_{x}g_\varepsilon= l(g_\varepsilon)+G(f_\varepsilon,g_\varepsilon).
\end{cases}
\end{equation} 
In addition we assume that the operator $L$ admits the following decomposition:
\begin{equation}\label{decom}
L(g_\varepsilon,f_\varepsilon)=L^0(f_\varepsilon)+\varepsilon L^1(g_\varepsilon,f_\varepsilon),
\end{equation}
where the perturbation turning operators $L^0$ and $L^1$ are linear integral operators with respect to $f_\varepsilon$, and reads:
\begin{equation}\label{n0}
L^{0}(f_\varepsilon)=\int_{V}\left(T^{0}(v,v')f_\varepsilon(t,x,v')-T^{0}(v',v)f_\varepsilon(t,x,v)\right)dv',
\end{equation}
\begin{equation}\label{n1}
L^{1}[g_\varepsilon](f_\varepsilon)=\int_{V}\left(T^{1}[g_\varepsilon](v,v')f_\varepsilon(t,x,v')-T^{1}[g](v',v)f_\varepsilon(t,x,v)\right)dv',
\end{equation}
while the operator $l$ is still defined by Eq. \eqref{operatorl}.
In this work we consider the following turning kernels $T^0$, $T^1$, and $\mathcal{K}$ given by 
\begin{equation}\label{T0gamma}
T^{0}(v,v')=\frac{\mu_0}{|V|}(1+ \gamma^2 v \cdot v'), \qquad \gamma^2 \int_V v\otimes v\,dv = |V| I_d,
\end{equation}
\begin{equation}
T^{1}[g](v,v')=\frac{\mu_1}{|V|}-\frac{\mu_{2} \gamma^2}{|V|}v'\cdot \alpha(<g_\varepsilon>),
\end{equation}
\begin{equation}
\mathcal{K}(v,v')=\frac{\sigma}{|V|},
\end{equation}
where $\mu_0$, $\mu_1$, $\mu_2$, $\sigma$ are real positive constants, $\alpha$ is a mapping $ \mathbb{R} \longrightarrow \mathbb{R}^d$, and $\vert V \vert$ denotes the volume of $V$. Notice that since $V$ is assumed to be spherically symmetric, the constant $\gamma$ in \eqref{T0gamma} is well-defined.

With these considerations and after a straightforward calculation we obtain the following kinetic system, we refer to the paper \cite{[OVAK16]} for more details,  
\begin{equation}\label{cks}
\begin{cases}
\displaystyle \partial_t f_\varepsilon+ v\cdot \nabla_x f_\varepsilon = \dfrac{\mu_0}{\varepsilon} \left( F_{J_\varepsilon} - f_\varepsilon \right) + \mu_1 \left( \frac{n_\varepsilon}{\vert V \vert} - f_\varepsilon \right)\\
\hspace{2.8cm} \displaystyle  - \mu_2 \gamma^2 \left( \frac{J_\varepsilon}{\vert V \vert} -v f_\varepsilon \right) \cdot \alpha(S_\varepsilon) , \vspace{0.25cm}\\
\displaystyle \partial_t g_\varepsilon + v \cdot \nabla_x g_\varepsilon = \sigma \left( \frac{S_\varepsilon}{\vert V \vert} -g_\varepsilon \right) + a n_\varepsilon - bS_\varepsilon,
\end{cases}
\end{equation}
where:
\begin{itemize}
\item The local densities $n_\varepsilon(t,x)$ and $S_\varepsilon(t,x)$ are defined by
\begin{equation*}
n_\varepsilon(t,x)=\int_V f_\varepsilon(t,x,v)dv, \quad \text{and} \quad  S_\varepsilon(t,x) = \int_V g_\varepsilon(t,x,v) dv,
\end{equation*}
while the flux function $J_\varepsilon(t,x)$ fulfills
\begin{equation*}
J_\varepsilon(t,x) = \int_V v f_\varepsilon(t,x,v) dv.
\end{equation*}

\item The equilibrium function $F_{J_\varepsilon}(t,x,v)$ is assumed to be a linear combination of $1$, $v_1, \ldots , v_d$:
\begin{equation}\label{defFJ}
F_{J_\varepsilon}(t,x,v) = \dfrac{1}{\vert V \vert} \left( n_\varepsilon(t,x) + \gamma^2 J_\varepsilon(t,x) \cdot v \right).
\end{equation}
This equilibrium function is such that (see \eqref{T0gamma} for the definition of $\gamma$)
$$
\int_V F_{J_\varepsilon}(t,x,v)\,dv = n_\varepsilon(t,x), \qquad
\int_V v F_{J_\varepsilon}(t,x,v)\,dv = J_\varepsilon(t,x).
$$
\end{itemize}
This system is completed with initial condition
\begin{equation}\label{ic}
f_\varepsilon(0,x,v) = f^0_\varepsilon(x,v), \quad \text{and} \quad g_\varepsilon(0,x,v) = g^0_\varepsilon(x,v).
\end{equation}

\section{Existence result}
The existence of solutions to kinetic models of chemotaxis coupled to parabolic or elliptic system for the chemoattractant concentration has been studied in several papers (see for instance \cite{[CMPS04],[EH06],[HKS],[V10]}). However, the study of coupled kinetic systems like Eq. \eqref{cks} is less common.

The aim of this section is to study the Cauchy problem \eqref{cks}-\eqref{ic} for fixed $\varepsilon>0$. More in detail we will state and prove an existence and uniqueness result for the kinetic model \eqref{cks}-\eqref{ic} in Theorem \ref{th:exist}. The proof is based on a fixed point procedure, after establishing some a priori estimates. 

We now introduce some notations which will be used throughout this section: $X_T:=L^\infty((0,T) \times \mathbb{R}^d \times V)$ stands for the Lebesgue space of essentially bounded measurable functions, with norm given by
\begin{equation*}
\left \| f \right \|_{L^\infty_{t,x,v}} = \inf \left\lbrace C \geq 0; \; \vert f(x) \vert \leq C \; \text{for almost every} \; (t,x,v) \in (0,T) \times \mathbb{R}^d \times V \right\rbrace,
\end{equation*}
and we have analogous definitions for $L^\infty_{x}$, $L^\infty_{t,x}$ and $L^\infty_{x,v}$. 
Moreover, we define $X_T^+$ the subspace of $X_T$ with nonnegative functions.

We assume that $\alpha$ is a bounded and globally Lipschitz continuous function on $\mathbb{R}$:
There exists $\alpha_\infty>0$, $L_\alpha>0$ such that
\begin{equation}\label{(H)}
\forall\, S_1,S_2\in\mathbb{R}, \qquad
\| \alpha(S_1)\|\leq \alpha_\infty, \quad
\| \alpha \left( S_1 \right) - \alpha \left( S_2 \right) \| \leq L_\alpha |S_1 - S_2|.
\end{equation}

\begin{definition}
We say that $f$ is a weak solution of \eqref{cks}--\eqref{ic} on $X_T$  for $T>0$, if $f \in X_T$ and satisfies
\begin{equation*}
\begin{cases}
&\displaystyle\int_{(0,T) \times \mathbb{R}^d \times V}\left(\partial_t \varphi + v \cdot \nabla_x \varphi \right) f \,dxdvdt = -\dfrac{\mu_0}{\varepsilon} \int_{(0,T) \times \mathbb{R}^d \times V} \left(F_J -f\right) \varphi \,dxdvdt \\[2mm]
& \qquad\quad \displaystyle - \mu_1 \int_{(0,T) \times \mathbb{R}^d \times V} \left( \dfrac{n}{\vert V \vert}-f \right) \varphi \,dxdvdt  - \int_{\mathbb{R}^d \times V} f^0(x,v) \; \varphi(0,x,v) \,dxdv  \vspace{0.2cm}\\
& \qquad\quad \displaystyle + \mu_2 \gamma^2 \int_{(0,T) \times \mathbb{R}^d \times V} \left( \dfrac{J}{\vert V \vert} -vf \right) \cdot \alpha(S) \,  \varphi \,dxdvdt, \vspace{0.25cm}\\
&\displaystyle\int_{(0,T) \times \mathbb{R}^d \times V} \displaystyle\left(\partial_t \varphi + v \cdot \nabla_x \varphi \right) g \,dxdvdt = -\sigma \int_{(0,T) \times \mathbb{R}^d \times V} \left( \dfrac{n}{\vert V \vert} -g \right) \varphi \,dxdvdt \vspace{0.2cm}\\
& \qquad\quad \displaystyle + \int_{(0,T) \times \mathbb{R}^d \times V} \left( an - bS \right) \varphi \,dxdvdt - \int_{\mathbb{R}^d \times V} g^0(x,v) \; \varphi(0,x,v)\,dxdv,
\end{cases}
\end{equation*}
for any test function $\varphi \in \mathcal{D}([0,T) \times \mathbb{R}^d \times V)$. 
\end{definition}

We now state the main result of this section.
\begin{theorem}[Existence of weak solutions]
\label{theorem1}\label{th:exist}
Let $(f^0,g^0)\in L^\infty_{x,v}\times L^\infty_{x,v}$ be nonnegative and assume that $\alpha$ satisfies assumption \eqref{(H)}. Then the Cauchy problem \eqref{cks}-\eqref{ic} has a unique global weak solution $(f,g)$, with $(f,g)\in X_T^+\times X_T^+$.

Moreover, if $(f^0,g^0)\in L^1_{x,v}\times L^1_{x,v}$, then for any $t\in [0,T]$, 
$\|f(t,\cdot,\cdot)\|_{L^1_{x,v}} = \|f^0\|_{L^1_{x,v}}$ and 
$\|g(t,\cdot,\cdot)\|_{L^1_{x,v}} = \frac ab \|f^0\|_{L^1_{x,v}}(1-e^{-b|V|t}) + \|g^0\|_{L^1_{x,v}} e^{-b|V|t}$.
\end{theorem}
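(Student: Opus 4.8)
The plan is to build the solution by a standard fixed-point argument on the mild (Duhamel) formulation along characteristics, first locally in time, then globally by iterating thanks to uniform-in-time $L^\infty$ bounds; the conservation identities then follow by testing the weak formulation with $\varphi\equiv 1$ (suitably truncated). First I would rewrite \eqref{cks} in mild form: along the characteristics $X(s)=x+(s-t)v$, each equation becomes an integral equation of the form $f(t,x,v)=f^0(x-tv,v)+\int_0^t \big(\tfrac{\mu_0}{\varepsilon}F_{J}+\dots\big)(s,x-(t-s)v,v)\,ds$, and similarly for $g$. Define a map $\Phi$ on $X_T^+\times X_T^+$ sending a pair $(\tilde f,\tilde g)$ to the solution of the \emph{linear} problem obtained by freezing the nonlinear/nonlocal coefficients $n_{\tilde f}, J_{\tilde f}, S_{\tilde g}, \alpha(S_{\tilde g})$; the two equations then decouple into linear transport-with-bounded-zeroth-order-term equations, which are uniquely solvable in $X_T$. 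The key preliminary estimates are: (i) since $V\subset B_\nu$ is bounded, $|n_\varepsilon|\le |V|\,\|f\|_{L^\infty_v}$, $|J_\varepsilon|\le \nu|V|\,\|f\|_{L^\infty_v}$, $|S_\varepsilon|\le |V|\,\|g\|_{L^\infty_v}$, so all the nonlocal terms are controlled by the $L^\infty$ norms; (ii) $\alpha$ is bounded by $\alpha_\infty$ and $L_\alpha$-Lipschitz by \eqref{(H)}.

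For the contraction, I would show that on a short interval $[0,T_0]$, with $T_0$ depending only on $\varepsilon,\mu_i,\sigma,a,b,\nu,|V|$ and the size of the initial data (measured by $R:=\|f^0\|_{L^\infty_{x,v}}+\|g^0\|_{L^\infty_{x,v}}$), $\Phi$ maps the ball $B_R\subset X_{T_0}^+\times X_{T_0}^+$ into itself and is a strict contraction there. Mapping into the ball uses the mild formulation and a Gronwall argument: the zeroth-order coefficients are bounded (e.g.\ $\mu_0/\varepsilon+\mu_1+\mu_2\gamma^2\nu\alpha_\infty$ for $f$), so $\|f(t)\|_{L^\infty_{x,v}}\le e^{Ct}\|f^0\|_{L^\infty_{x,v}}+\dots$; choosing $T_0$ small keeps this below $R$. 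Positivity is preserved because, after freezing coefficients, the $f$-equation has the structure $\partial_t f+v\cdot\nabla_x f + c(t,x)f = (\text{nonnegative source built from }\tilde f\ge0)$ with $c$ bounded, so the Duhamel formula gives $f\ge 0$ whenever $\tilde f, f^0\ge 0$ (same for $g$, using $an_\varepsilon\ge 0$). The contraction estimate compares $\Phi(\tilde f_1,\tilde g_1)$ and $\Phi(\tilde f_2,\tilde g_2)$: subtracting the two mild formulations, bounding the differences of the frozen coefficients by $\|\tilde f_1-\tilde f_2\|$ and $\|\tilde g_1-\tilde g_2\|$ (using boundedness of $V$ and the Lipschitz bound on $\alpha$), one gets $\|\Phi(\tilde f_1,\tilde g_1)-\Phi(\tilde f_2,\tilde g_2)\|_{X_{T_0}}\le C T_0\,\|(\tilde f_1,\tilde g_1)-(\tilde f_2,\tilde g_2)\|_{X_{T_0}}$, which is a contraction for $T_0$ small. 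Banach's fixed point theorem then yields a unique local weak solution; a standard density/approximation argument identifies the mild solution with the weak solution in the sense of the Definition.

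To pass from local to global existence, the main point — and the step I expect to be the real obstacle — is an \emph{a priori bound on $\|f_\varepsilon(t)\|_{L^\infty}$ and $\|g_\varepsilon(t)\|_{L^\infty}$ that does not deteriorate as $t$ grows}, so that $T_0$ does not shrink to zero upon iteration. For $f$, along characteristics $\tfrac{d}{dt}|f|\le -(\mu_0/\varepsilon+\mu_1)|f| + \tfrac{\mu_0}{\varepsilon}|F_J| + \mu_1 \tfrac{n}{|V|} + \mu_2\gamma^2(|J|/|V|+\nu|f|)\alpha_\infty$, and since $|F_J|\le \tfrac{1}{|V|}(n+\gamma^2\nu|J|)\le C\|f\|_{L^\infty_v}$ and $n,|J|\le C\|f\|_{L^\infty_v}$, one obtains $\tfrac{d}{dt}\|f(t)\|_{L^\infty_{x,v}}\le C_\varepsilon \|f(t)\|_{L^\infty_{x,v}}$ with $C_\varepsilon$ independent of $t$; Gronwall gives a bound on any finite $[0,T]$, which suffices to extend the solution up to $T$. (One may need to bound $n_\varepsilon$ first by mass conservation — see below — to get a cleaner estimate, but the crude $L^\infty_v$ bound already closes the loop.) The $g$-equation is then linear in $g$ once $n_\varepsilon$ is known, with bounded coefficients, so its global bound is immediate. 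Finally, the conservation laws: formally integrating the first equation of \eqref{cks} in $(x,v)$, the right-hand side integrates to zero ($\int_V(F_{J}-f)\,dv=0$ since $\int_V F_J\,dv=n$, $\int_V(n/|V|-f)\,dv=0$, and $\int_V(J/|V|-vf)\,dv=0$ since $\int_V v\,dv=0$ by spherical symmetry), whence $\tfrac{d}{dt}\|f(t)\|_{L^1_{x,v}}=0$; for $g$, integrating the second equation gives $\tfrac{d}{dt}\|g(t)\|_{L^1_{x,v}} = a\,|V|\,\|n_\varepsilon(t)\|_{L^1_x} - b\,|V|\,\|g(t)\|_{L^1_{x,v}}$ — wait, more carefully $\int_V\sigma(S/|V|-g)\,dv=0$, $\int_{x}\int_V a n\,dv\,dx = a|V|\|f\|_{L^1_{x,v}}$, $\int_x\int_V bS\,dv\,dx = b|V|\|g\|_{L^1_{x,v}}$ — so $y(t):=\|g(t)\|_{L^1_{x,v}}$ solves $y'=a|V|\|f^0\|_{L^1_{x,v}}-b|V|y$, whose solution is exactly the claimed formula. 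To make this rigorous one tests the weak formulation with $\varphi=\chi_R(x)\theta(t)$, $\chi_R$ a cutoff equal to $1$ on $B_R$, and lets $R\to\infty$ using that $f,g\in L^1$; the transport-term contributions vanish in the limit. The only genuinely delicate point in the whole argument is ensuring the local existence time $T_0$ can be taken uniform in each step of the continuation, which is precisely what the $t$-independent Gronwall constant $C_\varepsilon$ provides.
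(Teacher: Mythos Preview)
Your strategy is essentially the same as the paper's: mild/Duhamel formulation along characteristics, a Banach fixed-point argument for local existence, $L^\infty$ a~priori bounds via Gronwall to iterate the local step to arbitrary $T$, and the $L^1$ identities by integrating the equations in $(x,v)$. The only structural difference is that the paper organizes the fixed point as a \emph{composition} $\mathcal F=\mathcal F_2\circ\mathcal F_1$ acting on $X_T$ alone (first solve the $g$-equation with $n$ given by $f$, which is genuinely linear in $g$; then solve the $f$-equation with $S$ frozen), whereas you freeze all moments at once and work on the product space $X_T\times X_T$. Both variants lead to the same contraction inequality of the form $C\,T_0$ after Gronwall, so this is cosmetic.

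One point to tighten: your positivity claim that, after freezing, the $f$-equation has a ``nonnegative source built from $\tilde f\ge 0$'' is not true as written. With $K=\tfrac{\mu_0}{\varepsilon}+\mu_1-\mu_2\gamma^2 v\cdot\alpha(S)$, the source is
\[
\frac{\mu_0}{\varepsilon}F_J+\frac{\mu_1}{|V|}n-\frac{\mu_2\gamma^2}{|V|}J\cdot\alpha(S)
=\frac{1}{|V|}\int_V\Big(T^0(v,v')+\varepsilon\,T^1[g](v,v')\Big)\tilde f(v')\,dv',
\]
and nonnegativity of this integrand requires the (implicit) smallness conditions $\gamma^2\nu^2\le 1$ and $\mu_1\ge \mu_2\gamma^2\nu\alpha_\infty$ ensuring $T^0\ge 0$ and $T^1\ge 0$. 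The paper does not spell this out either, so your argument is no worse than the original here, but you should state the hypothesis rather than assert the source is automatically nonnegative.
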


The proof of Theorem \ref{theorem1} is divided into several steps. We first establish some a priori estimates thanks to a characteristics method. Then, applying a fixed point procedure, we establish the existence of a local in time solution. This solution can be extended for arbitrary time $T>0$ and therefore we get a global existence result.

\subsection{A priori estimates}\label{sub1}

We start with the following a priori estimates.
\begin{lemma}[A priori estimates] 
\label{lemma1}
Let $T>0$ and suppose that $\alpha$ satisfies assumption \eqref{(H)}.
Let $(f^0,g^0)$ be given in $L^\infty_{x,v}\times L^\infty_{x,v}$.
Let $(f,g)$ be a weak solution of \eqref{cks}-\eqref{ic} such that $(f,g)\in X_T^+ \times X_T^+$ and $(\nabla_x f, \nabla_x g)\in {X_T}^d \times {X_T}^d$. Then $(f,g)$ satisfies the following estimates:
\begin{equation}\label{lem 2.1}
\|n \|_{L^\infty _{t,x }}+ \|f\|_{X_T} \leq C_1 \|f^0\|_{L^\infty _{x,v }},
\end{equation}
\begin{equation}\label{lem 2.2}
\|S \|_{L^\infty _{t,x }}+\|g\|_{X_T} \leq C_2 \big(\|f^0\|_{L^\infty _{x,v }}+\|g^0\|_{L^\infty _{x,v }}\big).
\end{equation}

Furthermore,  if the initial data $(f^0,g^0)\in L^1_{x,v}\times L^1_{x,v}$ then we have,
$\forall\,t\in [0,T]$, $\|f(t,\cdot,\cdot)\|_{L^1_{x,v}} = \|f^0\|_{L^1_{x,v}}$, and
$$
\|g(t,\cdot,\cdot)\|_{L^1_{x,v}} = \frac ab \|f^0\|_{L^1_{x,v}}(1-e^{-b |V| t}) + \|g^0\|_{L^1_{x,v}} e^{-b|V|t}.
$$

Moreover, if the initial data are given in $W^{1,\infty}_{x,v}\times W^{1,\infty}_{x,v}$ and assuming that $\alpha\in C^1(\RR)$, then
\begin{equation}\label{lem 2.3}
\|\nabla _x n \|_{(L^\infty _{x,v})^d}+\|\nabla_x f\|_{(X_T)^d} \leq C_3 \big(\|\nabla_x f^0\|_{(L^\infty _{x,v })^d}+\|\nabla_x g^0\|_{(L^\infty _{x,v })^d}\big),
\end{equation}
\begin{equation}\label{lem 2.4}
\|\nabla _x S \|_{(L^\infty _{x,v})^d}+\|\nabla_x g\|_{(X_T)^d} \leq C_4 \big(\|\nabla_x f^0\|_{(L^\infty _{x,v })^d}+\|\nabla_x g^0\|_{(L^\infty _{x,v })^d}\big),
\end{equation}
where the constants $C_i,\; i=1,2,3,4,$ are independents of time $T>0$.
\end{lemma}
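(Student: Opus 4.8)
My plan is to derive all four groups of estimates from the mild formulation of \eqref{cks} obtained by integrating along the straight characteristic lines $s\mapsto(t-s,\,x-vs)$. First I would recast the $f$--equation of \eqref{cks} as a transport equation $\partial_t f + v\cdot\nabla_x f + \lambda_\varepsilon f = \mathcal S_\varepsilon$, putting into the damping coefficient $\lambda_\varepsilon = \tfrac{\mu_0}{\varepsilon}+\mu_1-\mu_2\gamma^2\,v\cdot\alpha(S)$ all the terms proportional to $f$, and into the source $\mathcal S_\varepsilon = \tfrac{\mu_0}{\varepsilon}F_{J}+\mu_1\tfrac{n}{|V|}-\mu_2\gamma^2\tfrac{J}{|V|}\cdot\alpha(S)$ the remaining macroscopic terms; similarly the $g$--equation becomes a transport equation with constant damping $\sigma$ and source $\sigma\tfrac{S}{|V|}+an-bS$. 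Integration along characteristics then produces closed Duhamel representations of $f$ and $g$ in terms of $f^0$, $g^0$ and of the macroscopic fields $n$, $J$, $S$; the regularity hypothesis $(\nabla_x f,\nabla_x g)\in(X_T)^d\times(X_T)^d$ justifies these manipulations. Since $f,g\ge0$ by hypothesis, the macroscopic fields are pointwise dominated by the kinetic sup-norms, $0\le n(t,x)\le|V|\,\|f(t)\|_{L^\infty_{x,v}}$, $|J(t,x)|\le\nu|V|\,\|f(t)\|_{L^\infty_{x,v}}$, $0\le S(t,x)\le|V|\,\|g(t)\|_{L^\infty_{x,v}}$, so that also $\|F_{J}(t)\|_{L^\infty_{x,v}}\le c\,\|f(t)\|_{L^\infty_{x,v}}$ with $c=c(V,\gamma)$, while $|v\cdot\alpha(S)|\le\nu\alpha_\infty$ keeps $\lambda_\varepsilon$ comparable to $\tfrac{\mu_0}{\varepsilon}$ up to $O(1)$ terms.

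Taking the supremum in $(x,v)$ in the two Duhamel formulas and inserting these bounds gives a coupled system of linear integral inequalities for $\|f(t)\|_{L^\infty_{x,v}}$ and $\|g(t)\|_{L^\infty_{x,v}}$, from which \eqref{lem 2.1}--\eqref{lem 2.2} follow by a Gronwall argument, the $L^\infty_{t,x}$ bounds on $n$ and $S$ being immediate from the pointwise domination. \emph{This step is where I expect the main difficulty to lie}: to obtain constants that do not depend on $T$, the stiff gain $\tfrac{\mu_0}{\varepsilon}F_J$ and the stiff loss $-\tfrac{\mu_0}{\varepsilon}f$ must be kept together and compared, rather than estimated separately, using that $F_J$ is the $L^2(V)$--orthogonal projection of $f$ onto $\mathrm{span}\{1,v_1,\dots,v_d\}$ — it carries the same mass $n$ and flux $J$ as $f$ — so that in the Duhamel integral the stiff exponential damping $e^{-(\mu_0/\varepsilon)(t-s)}$ coming from $\lambda_\varepsilon$ compensates the stiff source and the rate $\mu_0/\varepsilon$ does not enter the final constant; the same remark applies to the conservative lower-order operators $\mu_1(\tfrac{n}{|V|}-f)$ and $\sigma(\tfrac{S}{|V|}-g)$, and the genuinely $O(1)$ chemotactic term and the $an-bS$ coupling are then absorbed routinely.

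Next, assuming in addition $f^0,g^0\in L^1_{x,v}$, the $L^1$ identities come from integrating the two equations of \eqref{cks} over $(x,v)$: the transport terms vanish after integration by parts (spatial decay being available since $f,g\in W^{1,\infty}_x\cap L^1_{x,v}$), the turning operators all have zero $v$--integral, $\int_V(F_J-f)\,dv=\int_V(\tfrac{n}{|V|}-f)\,dv=\int_V(\tfrac{J}{|V|}-vf)\cdot\alpha(S)\,dv=\int_V\sigma(\tfrac{S}{|V|}-g)\,dv=0$, and only $\int_V(an-bS)\,dv=|V|(an-bS)$ remains, giving $\tfrac{d}{dt}\|f(t)\|_{L^1_{x,v}}=0$ (mass conservation for $f$) and the linear ODE $\tfrac{d}{dt}\|g(t)\|_{L^1_{x,v}}=|V|\big(a\|f^0\|_{L^1_{x,v}}-b\|g(t)\|_{L^1_{x,v}}\big)$, whose integration is exactly the stated exponential formula.

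Finally, for the $W^{1,\infty}$ estimates \eqref{lem 2.3}--\eqref{lem 2.4}, I would differentiate the system in $x$: each component $\partial_{x_i}f$ solves a transport equation with the same damping $\lambda_\varepsilon$ and a source of the same structure as $\mathcal S_\varepsilon$, the only new terms being those in which the spatial derivative hits the coefficient $\alpha(S)$, producing factors $\alpha'(S)\,\partial_{x_i}S$ (which is why $\alpha\in C^1$ is assumed), and likewise for $\partial_{x_i}g$. Running the same characteristics/Duhamel/Gronwall scheme on the pair $(\nabla_x f,\nabla_x g)$, with data $\nabla_x f^0,\nabla_x g^0$, with $\nabla_x n,\nabla_x J,\nabla_x S$ dominated by the appropriate multiples of $\|\nabla_x f\|_{X_T}$ and $\|\nabla_x g\|_{X_T}$, and using the already-proved $L^\infty$ bounds on $f,g,S$ to control the $\alpha(S)$, $\alpha'(S)$ factors, closes the gradient estimates with constants $C_3,C_4$ of the same structure as $C_1,C_2$, and in particular independent of $T$.
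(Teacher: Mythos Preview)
Your approach coincides with the paper's: the same splitting of the $f$--equation into damping $K=\tfrac{\mu_0}{\varepsilon}+\mu_1-\mu_2\gamma^2 v\cdot\alpha(S)$ and source $R_1=\tfrac{\mu_0}{\varepsilon}F_J+\tfrac{\mu_1 n}{|V|}-\mu_2\gamma^2 J\cdot\alpha(S)$, the same Duhamel representation along characteristics, a Gronwall loop on $\|n(t)\|_{L^\infty_x}$ (then $\|S(t)\|_{L^\infty_x}$), direct integration in $(x,v)$ for the $L^1$ identities, and differentiation in $x$ followed by the same scheme for the gradient bounds.

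One remark: you devote your ``main difficulty'' paragraph to securing $T$--independent (and implicitly $\varepsilon$--independent) constants by balancing the stiff gain $\tfrac{\mu_0}{\varepsilon}F_J$ against the stiff loss $-\tfrac{\mu_0}{\varepsilon}f$ via the projection property of $F_J$. The paper does \emph{not} do this. It simply bounds $\exp\big(\int_t^s K\,d\tau\big)\le e^{C_1T}$ with $C_1=\tfrac{\mu_0}{\varepsilon}+\mu_1+\mu_2\alpha_\infty\gamma^2\nu$, estimates $|R_1|$ termwise by constants again containing $\tfrac{\mu_0}{\varepsilon}$, and closes with Gronwall over $[0,T]$. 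So the constants produced by the paper's argument visibly depend on both $T$ and $\varepsilon$, notwithstanding the closing sentence of the lemma; your cruder estimates already reproduce the paper's proof, and the refinement you sketch is not carried out there. (The $\varepsilon$--uniform control needed for the hyperbolic limit is obtained separately, in $L^2$, in Section~\ref{sec:hyp}.)
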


\begin{proof}
{1.} First we begin with the proof of Eq. (\ref{lem 2.1}). For this purpose we write the first equation of system \eqref{cks} in the following way
\begin{equation}\label{2.5}
\partial_t f + v \cdot \nabla_x f + Kf = R_1,
\end{equation}
where the functions $K$ and $R_1$ are given by
\begin{equation}
K=\dfrac{\mu_0}{\varepsilon} +\mu_1 -\mu_2 \gamma^2 v \cdot \alpha(S), \;\; \text{and} \;\; R_1=\dfrac{\mu_0}{\varepsilon}F_{J}+ \dfrac{\mu_{1} n}{|V|} - \mu_{2}\gamma^2 J \cdot \alpha(S),
\end{equation}
where the expression of $F_J$ is given in \eqref{defFJ}.
Integrating \eqref{2.5} along the characteristics, we get
\begin{equation}\label{2.7}
\begin{split}
f(t,x,v)  =  \exp\left(\int_t^0 K(\tau,\widetilde{x}_\tau, v )d\tau \right) f^0(x-tv,v)\hspace*{1.1cm} \\
+ \int_0^t \exp\left( \int_t^s K(\tau,\widetilde{x}_\tau, v )d\tau \right) R_1(x,\widetilde{x}_s,v)ds,
\end{split}
\end{equation}
where we set $\widetilde{x}_\tau=x+(\tau - t)v$ (this notation will be used throughout this section).
Moreover,  using assumption \eqref{(H)}, for each $0\leq s \leq \tau \leq t\leq T $ we have
\begin{equation}
\left|K(\tau , \widetilde{x}_\tau, v )\right| \leq \dfrac{\mu_0}{\varepsilon}+\mu_1+\mu_2\alpha_\infty\gamma^2\nu.
\end{equation}
It follows
\begin{equation}\label{2.8}
\exp\left( \int_t^s K(\tau,\widetilde{x}_\tau, v )d\tau \right)\leq e^{C_1 T} \leq C_2.
\end{equation}
According to Eqs. (\ref{2.7}) and (\ref{2.8}) we write
\begin{equation}\label{2.9}
f(t,x,v)\leq C_2 f^0(x-tv,v)+C_2 \int_0^t |R_1(s,\widetilde{x}_s, v)|ds.
\end{equation}
We estimate the last term of the right hand side of the later inequality as follows:
\begin{equation*}
\begin{split}
\int_0^t |R_1(s,\widetilde{x}_s,v)|ds
\leq &\left(\dfrac{\mu_0}{\varepsilon|V|}+\dfrac{\mu_1}{|V|}\right)\int_0^tn(s,\widetilde{x}_s)ds \\
&+\left(\dfrac{\mu_0\gamma^2\nu}{\varepsilon |V|} + \mu_2 \alpha_\infty\gamma^2 \right)\int_0^t|J(s,\widetilde{x}_s)|ds\\
 \leq & \left[\dfrac{\mu_0}{\varepsilon|V|}+\dfrac{\mu_1}{|V|}+\dfrac{\mu_0\gamma^2\nu^2}{\varepsilon |V|}+\mu_2 \alpha_\infty\gamma^2\nu\right] \int_0^t n(s,\widetilde{x}_s)\,ds\\
 \leq & C_3 \int_0^t \|n(s,.)\|_{L^\infty_x}\,ds.
\end{split}
\end{equation*}
Injecting this last estimate in (\ref{2.9}), we obtain
\begin{equation}\label{2.10}
f(t,x,v)\leq C_2 \|f^0\|_{L^\infty _{x,v}}+C_4\int_0^t \| n(s, \cdot) \|_{L^\infty _x}\,ds.
\end{equation}
An integration with respect to $v$ provides
\begin{equation}
\|n(t,\cdot)\|_{L^\infty _x}\leq C_2|V|\; \|f^0\|_{L^\infty _{x,v}}+C_4 |V| \int_0^t \| n(s,\cdot) \|_{L^\infty _x}\,ds.
\end{equation}
Therefore, applying Gronwall's inequality we get
\begin{equation}\label{2.11}
\| n(t, \cdot) \|_{L^\infty _x}\leq C \|f^0\|_{L^\infty _{x,v}}.
\end{equation}

Using Eq. (\ref{2.10}) together with (\ref{2.11}), we obtain a similar bound on $f$ in $L^\infty_{x,v}$.
This completes the proof of the first assertion (\ref{lem 2.1}).

{2.} The proof of (\ref{lem 2.2}) is straightforward and follows the same ideas as of estimate \eqref{lem 2.1}. Indeed, we have
\begin{equation}\label{2.12}
\partial_t g + v \cdot \nabla_x g + \sigma g = R_2, \;\; \text{where} \;\; R_2= \left(\dfrac{\sigma}{|V|}-b\right)S+a\,n.
\end{equation}
Integrating along the characteristics, we get
\begin{equation}
g(t,x,v)=e^{-\sigma t}g^0(x-tv,v)+\int_0^te^{(s-t)\sigma}R_2(s,\widetilde{x}_s)ds,
\end{equation}
and easy computation yields
\begin{eqnarray}\label{12.5}
g(t,x,v) & \leq & g^0(x-tv,v)+\int_0^t|R_2(s,\widetilde{x}_s)|ds\nonumber\\
& \leq & \|g^0\|_{L^\infty _{x,v}}+\left| \dfrac{\sigma}{|V|}-b\right|\int_0^t|S(s,\widetilde{x}_s)|ds+ a \int_0^t|n(s,\widetilde{x}_s)|ds \nonumber\\
& \leq & \|g^0\|_{L^\infty _{x,v}}+\left| \dfrac{\sigma}{|V|}-b\right|\int_0^t\|S(s, \cdot)\|_{L^\infty _x}\,ds
+ a \int_0^t \| n(s,\cdot) \|_{L^\infty _x}\,ds. \qquad
\end{eqnarray}
According to (\ref{lem 2.1}) we can write $$\|n(s,.)\|_{L^\infty _x}\leq C_1\|f^0\|_{L^\infty _{x,v}},$$
hence, from \eqref{12.5} it follows that
\begin{equation}\label{2.13}
g(t,x,v)\leq \|g^0\|_{L^\infty _{x,v}}+C_1\|f^0\|_{L^\infty _{x,v}}+C_2 \int_0^t\|S(s, \cdot)\|_{L^\infty_x}\,ds.
\end{equation}
Integrating over $V$, we obtain
\begin{equation}
S(t,x)\leq |V|\, \|g^0\|_{L^\infty _{x,v}} + C_1 |V| \,\|f^0\|_{L^\infty _{x,v}}+C_2|V| \int_0^t\|S(s,\cdot)\|_{L^\infty _x}\,ds,
\end{equation}
and we estimate $S$ thanks to Gronwall's inequality and we conclude the proof of (\ref{lem 2.2}) with (\ref{2.13}).

{3.} Assuming the initial data in $L^1_{x,v}$, we have by integration of the first equation in \eqref{cks}: $\|f(t,\cdot,\cdot)\|_{L^1_{x,v}} = \|f^0\|_{L^1_{x,v}}.$
Integrating the second equation in \eqref{cks}, we get
$$
\frac{d}{dt}\|g(t,\cdot,\cdot)\|_{L^1_{x,v}} = a |V| \|f^0\|_{L^1_{x,v}} - b |V| \|g(t,\cdot,\cdot)\|_{L^1_{x,v}}.
$$
We obtain the desired estimate by integrating in time this later identity.

{4.} We now prove (\ref{lem 2.3}) and (\ref{lem 2.4}). To begin with, we rewrite \eqref{cks} in the following way
\begin{equation}
\begin{cases}
\partial_t f + v \cdot \nabla_x f + \widetilde{K}f = \widetilde{R}_1,\\
\partial_t g + v \cdot \nabla_x g  = R_2,
\end{cases}
\end{equation}
where the functions $\widetilde{K}$ and $\widetilde{R}_1$ are defined by
\begin{equation}
\widetilde{K}  =  \dfrac{\mu_0}{\varepsilon}+\mu_1, \;\; \text{and}\;\; \widetilde{R}_1  =  \dfrac{\mu_0}{\varepsilon}F_{J}+\dfrac{\mu_1\,n}{|V|} - \frac{\mu_2\gamma^2}{|V|} J \cdot \alpha(S)+\mu_2\gamma^2 v \cdot \alpha(S)f,
\end{equation}
while $R_2$ is still given in (\ref{2.12}). Therefore, we obtain
\begin{equation} \label{2.14}
f(t,x,v)=e^{-t\widetilde{K}}f^0(x-tv,v)+\int_0^te^{(s-t)\widetilde{K}}\widetilde{R}_1(s,\widetilde{x}_s,v)ds,
\end{equation}
and
\begin{equation}\label{2.15}
g(t,x,v)=e^{-t\sigma}g^0(x-tv,v)+\int_0^te^{(s-t)\sigma}R_2(s,\widetilde{x}_s)ds.
\end{equation}
Let $i\in\{1,\ldots,d\}$ be arbitrary but fixed index, and for a generic function $h$ we denote by $h_i$ the partial derivate $\partial_{x_i}h$. Hence, from (\ref{2.14}) and (\ref{2.15}) we get
\begin{equation} \label{2.16}
f_i(t,x,v)=e^{-t\widetilde{K}}f_i^0(x-tv,v)+\int_0^te^{(s-t)\widetilde{K}}\partial_{x_i}\left(\widetilde{R}_1(s,\widetilde{x}_s,v)\right) ds,
\end{equation}
and
\begin{equation}\label{2.17}
g_i(t,x,v)=e^{-t\sigma}g_i^0(x-tv,v)+\int_0^te^{(s-t)\sigma}\partial_{x_i}\left(R_2(s,\widetilde{x}_s)\right) ds.
\end{equation}
We now estimate separately $f_i$ and $g_i$. From (\ref{2.16}) it follows that
\begin{equation} \label{2.18}
|f_i(t,x,v)| \leq \|f^0_i\|_{L^\infty_{x,v}}+\int_0^t \left|\partial_{x_i}\left(\widetilde{R}_1(s,\widetilde{x}_s,v)\right)\right| ds.
\end{equation}
We have
\begin{equation}\label{2.19}
\begin{split}
\partial_{x_i} & \big( \widetilde{R}_1(s, \widetilde{x}_s,v) \big)  =  \dfrac{\mu_0}{\varepsilon |V|}\left(n_i(s,\widetilde{x}_s)+\gamma^2 J_i(s,\widetilde{x}_s) \cdot v\right)+\dfrac{\mu_1\,n_i(s,\widetilde{x}_s)}{|V|}\\
&-\frac{\mu_2\gamma^2}{|V|} J_i(s,\widetilde{x}_s) \cdot \alpha\left(S(s,\widetilde{x}_s)\right) - \frac{\mu_2\gamma^2}{|V|} S_i(s,\widetilde{x}_s)J(s,\widetilde{x}_s) \cdot \alpha ' \left(S(s,\widetilde{x}_s)\right)\\
&+\mu_2\gamma^2 v \cdot \alpha\left(S(s,\widetilde{x}_s)\right)\,f_i(s,\widetilde{x}_s,v) +\mu_2\gamma^2 S_i(s,\widetilde{x}_s)\,v \cdot \alpha ' \left(S(s,\widetilde{x}_s)\right)\,f(s,\widetilde{x}_s,v) \\
= &  \left( \dfrac{\mu_0}{\varepsilon |V|}+ \dfrac{\mu_1}{|V|}\right)n_i(s,\widetilde{x}_s)+\left( \dfrac{\mu_0\gamma^2\,v}{\varepsilon |V|}- \frac{\mu_2\gamma^2}{|V|} \alpha\left(S(s,\widetilde{x}_s)\right)\right) \cdot J_i(s,\widetilde{x}_s)\\
& -\frac{\mu_2\gamma^2}{|V|} S_i(s,\widetilde{x}_s)J(s,\widetilde{x}_s) \cdot \alpha ' \left(S(s,\widetilde{x}_s)\right) + \mu_2\gamma^2 v \cdot \alpha\left(S(s,\widetilde{x}_s)\right)\,f_i(s,\widetilde{x}_s,v) \\
& + \mu_2\gamma^2 S_i(s,\widetilde{x}_s)\,v \cdot \alpha' \left(S(s,\widetilde{x}_s)\right)\,f(s,\widetilde{x}_s,v).
\end{split}
\end{equation}
We introduce the following notations
\begin{equation}
\widetilde{n}_i(s)=\int_V \|f_i(s,\cdot,v)\|_{L^\infty_x} dv, \text{\quad and \quad} \widetilde{S}_i(s)=\int_V \|g_i(s,\cdot,v)\|_{L^\infty_x} dv.
\end{equation}
In this way we have
\begin{equation}
|n_i(s,\widetilde{x}_s)| \leq \widetilde{n}_i(s), \quad |J_i(s,\widetilde{x}_s)|\leq \nu \widetilde{n}_i(s), \quad \text{and} \quad |S_i(s,\widetilde{x}_s)|\leq \widetilde{S}_i(s).
\end{equation}
Then from (\ref{2.19}) we immediately obtain
\begin{equation}\label{2.20}
\begin{split}
\Big | \partial_{x_i}\big(& \widetilde{R}_1(s,\widetilde{x}_s,v)\big) \Big|  \leq  C_1 \widetilde{n}_i(s) +C_2 \widetilde{S}_i(s)\|n(s,\cdot)\|_{L^\infty_x}\\
& +C_3 \|f_i(s,\cdot,v)\|_{L^\infty_{x}} +C_4\widetilde{S}_i(s)\|f(s,\cdot,\cdot)\|_{L^\infty_{x,v}}.
\end{split}
\end{equation}
According to (\ref{lem 2.1}) we have
\begin{equation}
\|n(s,\cdot)\|_{L^\infty_x} \leq C_1 \|f^0\|_{L^\infty_{x,v}}, \text{\quad and \quad} \|f(s,\cdot,\cdot)\|_{L^\infty_{x,v}}\leq C_2\|f^0\|_{L^\infty_{x,v}}.
\end{equation}
Therefore, using (\ref{2.20}) we deduce that
\begin{equation}
\Big| \partial_{x_i}\big( \widetilde{R}_1(s,\widetilde{x}_s,v) \big) \Big|  \leq  C_1 \widetilde{n}_i(s) +C_2 \widetilde{S}_i(s)+C_3 \|f_i(s,\cdot,v)\|_{L^\infty_{x}}.
\end{equation}
This last estimate together with (\ref{2.18}) allow to write
\begin{equation}\label{2.21}
 \|f_i(t,\cdot,v)\|_{L^\infty_x} \leq \|f_i^0 \|_{L^\infty_{x,v}} + C_1 \int_0^t \left(\widetilde{n}_i(s) + \widetilde{S}_i(s)\right) ds + C_3 \int_0^t \|f_i(s,\cdot,v)\|_{L^\infty_x} ds.
\end{equation}

The estimate on $g_i$ can be done similarly to assertion (\ref{2.21}). Indeed from Eq. (\ref{2.17}) it follows that
\begin{equation}\label{2.22}
 \|g_i(t,\cdot,v)\|_{L^\infty_x}\leq \|g_i^0\|_{L^\infty_{x, v}} +  \int_0^t \left|\partial_{x_i}\left(R_2(s,\widetilde{x}_s)\right)\right| ds,
\end{equation}
and we compute the first partial derivative of $R_2$ as follows
\begin{equation}
\partial_{x_i}\left(R_2(s,\widetilde{x}_s)\right)=\left( \dfrac{\sigma}{|V|}-b \right)S_i+n_i.
\end{equation}
Hence
\begin{equation}\label{2.23}
\partial_{x_i}\left(R_2(s,\widetilde{x}_s)\right)\leq \left| \dfrac{\sigma}{|V|}-b \right|\widetilde{S}_i(s)+\widetilde{n}_i(s).
\end{equation}
Taking Eqs. (\ref{2.22}) and (\ref{2.23}) into account we deduce that
\begin{equation}\label{2.24}
  \|g_i(t,\cdot,v)\|_{L^\infty_x}\leq \|g_i^0\|_{L^\infty_{x,v}} +  \int_0^t \left( \widetilde{n}_i(s)+\widetilde{S}_i(s) \right) ds.
\end{equation}
Next integrating, with respect to $v$. Eqs. (\ref{2.21}) and (\ref{2.24}) and adding the resulting inequalities, we can write
\begin{equation}\label{24.5}
\widetilde{n}_i(t)+\widetilde{S}_i(t)\leq C_1 \left( \|f_i^0\|_{L^\infty_{x,v}}+\|g_i^0\|_{L^\infty_{x,v}}\right)+ C_2  \int_0^t \left( \widetilde{n}_i(s)+\widetilde{S}_i(s) \right) ds.
\end{equation}
Therefore, in view of  Gronwall's inequality, equation \eqref{24.5} yields
\begin{equation}\label{2.25}
|n_i(s,x)|+|S_i(s,x)|\leq \widetilde{n}_i(s)+\widetilde{S}_i(s)\leq C_1 \left( \|f_i^0\|_{L^\infty_{x,v}}+\|g_i^0\|_{L^\infty_{x,v}}\right),
\end{equation}
and a similar estimate is obtained for $f_i$ and $g_i$ using (\ref{2.21}), (\ref{2.24}) and (\ref{2.25}). This complete the a-priori estimates.
\end{proof}

\subsection{Proof of Theorem \ref{theorem1}.}
We are now in position to prove the existence result.
The idea of the proof follows standard techniques consisting in, first, proving local in time existence by a fixed point procedure, second, iterating this process to obtain global in time existence.\\

For the local in time existence, let $T>0$, we introduce the map
$$\mathcal{F}:X_T \longrightarrow X_T,\qquad f \longmapsto \mathcal{F}(f):= \mathcal{F}_2(\mathcal{F}_1(f))$$
where $G=\mathcal{F}_1(f)$ is a weak solution of the following problem:
\begin{equation*}
\begin{cases}
\displaystyle \partial_tG+v \cdot \nabla_x G=\left(\dfrac{\sigma}{|V|}-b\right) \int_V Gdv+ an -\sigma G,\\
G(0,x,v)=g^0(x,v)\in L^\infty_{x,v},
\end{cases}
\end{equation*}
with the notation $n(t,x)=\int_Vf(t,x,v)dv$, while the functional $\mathcal{F}_2$ is defined by: $F=\mathcal{F}_2(g)$ is a weak solution of
\begin{equation*}
\begin{cases}
\displaystyle \partial_t F+v\cdot \nabla_x F=\dfrac{\mu_0}{\varepsilon}\left[\dfrac{1}{|V|}\left(\int_V F dv +\gamma^2\int_V vFdv \cdot v \right)-F\right] \\
\hspace*{2.6cm}\displaystyle +\mu_1\left(  \dfrac{1}{|V|}\int_V F dv -  F \right) - \mu_2 \gamma^2 \left(\dfrac{1}{|V|} \int_V vFdv - vF\right) \cdot \alpha(S),\\
F(0,x,v)=f^0(x,v)\in L^\infty_{x,v},
\end{cases}
\end{equation*}
with $S(t,x)=\int_Vg(t,x,v)dv$.
Existence of solutions for these two linear systems is now standard.
It is clear, adapting the techniques of Lemma \ref{lemma1} that $\mathcal{F}_1$ and $\mathcal{F}_2$ map $X_T$ into itself. 
Our objective is to show that $\mathcal{F}$ defines a contraction on $X_T$ for $T$ small enough. Let $f_1$ and $f_2$ be given in $X_T$, then we have the following result:

\begin{lemma}\label{lemma2}
For $T>0$ small enough, there exists a constant $C_1(T)<1$ such that
\begin{equation}\label{2.34}
\left\|\mathcal{F}_1(f_1)-\mathcal{F}_1(f_2)\right\|_{X_T}\leq C_1(T)\|f_1-f_2\|_{X_T}.
\end{equation}
\end{lemma}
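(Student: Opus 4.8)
The plan is to exploit the linearity of the map $\mathcal{F}_1$ and the smoothing effect of the Duhamel integral in time. Fix $f_1,f_2\in X_T$, set $G_j:=\mathcal{F}_1(f_j)$ and $n_j:=\int_V f_j\,dv$ for $j=1,2$, and introduce the difference $w:=G_1-G_2$. Since $G_1$ and $G_2$ solve the same linear equation with the same initial datum $g^0$, the function $w$ solves
\[
\partial_t w+v\cdot\nabla_x w+\sigma w=\left(\frac{\sigma}{|V|}-b\right)\int_V w(t,x,v')\,dv'+a\,(n_1-n_2),\qquad w(0,\cdot,\cdot)=0 .
\]

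The first step is to integrate this equation along the characteristics $\widetilde x_s=x+(s-t)v$, exactly as in the proof of Lemma \ref{lemma1}, using the vanishing initial datum; this gives the representation
\[
w(t,x,v)=\int_0^t e^{(s-t)\sigma}\left[\left(\frac{\sigma}{|V|}-b\right)\int_V w(s,\widetilde x_s,v')\,dv'+a\,(n_1-n_2)(s,\widetilde x_s)\right]ds .
\]
Next, since $\sigma>0$ and $s\le t$ one has $e^{(s-t)\sigma}\le 1$, while $\bigl|\int_V w(s,\widetilde x_s,v')\,dv'\bigr|\le |V|\,\|w(s,\cdot,\cdot)\|_{L^\infty_{x,v}}$ and $|(n_1-n_2)(s,\widetilde x_s)|\le |V|\,\|f_1-f_2\|_{X_T}$. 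Writing $\phi(t):=\|w(t,\cdot,\cdot)\|_{L^\infty_{x,v}}$ and $C:=|V|\,\bigl|\tfrac{\sigma}{|V|}-b\bigr|$, taking the supremum over $(x,v)$ yields
\[
\phi(t)\le C\int_0^t\phi(s)\,ds+a\,|V|\,t\,\|f_1-f_2\|_{X_T},\qquad 0\le t\le T .
\]
Gronwall's inequality then gives $\phi(t)\le a\,|V|\,t\,e^{Ct}\,\|f_1-f_2\|_{X_T}$, and therefore $\|\mathcal{F}_1(f_1)-\mathcal{F}_1(f_2)\|_{X_T}=\sup_{0\le t\le T}\phi(t)\le a\,|V|\,T\,e^{CT}\,\|f_1-f_2\|_{X_T}$. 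Setting $C_1(T):=a\,|V|\,T\,e^{CT}$, one has $C_1(T)\to 0$ as $T\to 0^+$, so for $T$ small enough $C_1(T)<1$, which is the claimed estimate \eqref{2.34}.

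There is no real obstacle here: this lemma is the easy, purely linear half of the fixed-point argument, and the contraction factor is produced automatically by the explicit power of $t$ coming from the Duhamel integral. The only point requiring a bit of care is the bookkeeping of constants, in particular checking that the self-coupling mean term $\int_V w\,dv'$, which carries no small prefactor, does not spoil the contraction; this is precisely what the Gronwall step handles, since it contributes only the harmless factor $e^{CT}$.
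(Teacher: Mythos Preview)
Your proof is correct and follows essentially the same route as the paper: form the difference $G_{12}=\mathcal{F}_1(f_1)-\mathcal{F}_1(f_2)$, integrate along characteristics with the decaying factor $e^{(s-t)\sigma}\le 1$, bound the source term, and close with Gronwall. If anything, you are a bit more explicit than the paper in isolating the factor $a\,|V|\,T$ coming from the Duhamel integral, which is exactly what makes $C_1(T)\to 0$ as $T\to 0^+$.
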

\begin{proof} We set $G_{12}=\mathcal{F}_1(f_1)-\mathcal{F}_1(f_2)$, then we have
\begin{equation}\label{2.35}
\partial_tG_{12}+v \cdot \nabla_x G_{12}=\left(\dfrac{\sigma}{|V|}-b\right) \int_V G_{12}dv-\sigma G_{12}+a(n_1-n_2),
\end{equation}
with the notations $n_i(t,x)=\int_Vf_i(t,x,v)dv,\; i=1,2$. Analogously to the proof of Lemma \ref{lemma1}, we write identity (\ref{2.35}) in the following way
\begin{equation}\label{2.36}
\partial_tG_{12}+v \cdot \nabla_x G_{12}+\sigma G_{12}=R_1,
\end{equation}
where
\begin{equation}
R_1=\left(\dfrac{\sigma}{|V|}-b\right)\int_V G_{12}dv+a(n_1-n_2).
\end{equation}
Moreover, from equation
\begin{equation}\label{2.37}
\dfrac{d}{ds}\left(e^{(s-t)\sigma}G_{12}(s,\widetilde{x}_s,v)\right)=e^{(s-t)\sigma}R_1(s,\widetilde{x}_s),
\end{equation}
it follows that
\begin{equation}\label{2.38}
G_{12}(t,x,v)=\int_0^t e^{(s-t)\sigma}R_1(s,\widetilde{x}_s) ds.
\end{equation}
$\big($We recall the notation $\widetilde{x}_s=x+(s-t)v$ $\big)$. Since $e^{(s-t)\sigma}<1$, for all $0\leq s\leq t\leq T$ we deduce from (\ref{2.38}) the following estimate
\begin{equation}\label{2.39}
|G_{12}(t,x,v)|\leq \int_0^t |R_1(s,\widetilde{x}_s)| ds.
\end{equation}
However, we have
\begin{equation}
\begin{split}
\left|R_1(s,\widetilde{x}_s)\right| & \leq  \left|\dfrac{\sigma}{|V|}-b \right|\,|V|\,\|G_{12}(s,\cdot,\cdot)\|_{L^\infty_{x,v}}+ a\,|V|\,\|f_1-f_2\|_{X_T}\\
& =  C_1 \,\|G_{12}(s,\cdot,\cdot)\|_{L^\infty_{x,v}} + C_2 \,\|f_1-f_2\|_{X_T}.
\end{split}
\end{equation}
Using this last inequality in Eq. (\ref{2.39}) we get
\begin{equation}
|G_{12}(t,x,v)|\leq \int_0^t C_1 \,\|G_{12}(s,\cdot,\cdot)\|_{L^\infty_{x,v}}  ds + C_2 \,\|f_1-f_2\|_{X_T},
\end{equation}
and the Gronwall lemma gives the desired estimate (\ref{2.34}), which finished the proof of Lemma \ref{lemma2}.
\end{proof}
Now, let us introduce $g_1=\mathcal{F}_1(f_1)$ and $g_2=\mathcal{F}_1(f_2)$. Then we claim that:
\begin{lemma}\label{lemma3}
For $T>0$ small enough, there exists a constant $C_2(T)<1$ such that
\begin{equation}\label{2.40}
\left\|\mathcal{F}_2(g_1)-\mathcal{F}_2(g_2)\right\|_{X_T}\leq C_2(T)\|g_1-g_2\|_{X_T}.
\end{equation}
\end{lemma}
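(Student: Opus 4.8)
The plan is to mimic, step by step, the proof of Lemma~\ref{lemma2}. Write $F_i := \mathcal{F}_2(g_i)$ and $S_i := \int_V g_i\,dv$ for $i=1,2$, and set $F_{12} := F_1 - F_2$. Subtracting the two equations defining $\mathcal{F}_2$, the difference $F_{12}$ solves, with zero initial datum,
\[
\partial_t F_{12} + v\cdot\nabla_x F_{12} + K\,F_{12} = R,
\qquad K := \frac{\mu_0}{\varepsilon}+\mu_1 ,
\]
where $R$ collects all remaining terms: the two ``gain'' contributions $\frac{\mu_0}{\varepsilon|V|}\bigl(\int_V F_{12}\,dv + \gamma^2(\int_V vF_{12}\,dv)\cdot v\bigr)$ and $\frac{\mu_1}{|V|}\int_V F_{12}\,dv$, the drift term $\mu_2\gamma^2\, v\cdot\alpha(S_1)\,F_{12}$, and the difference of the two chemotactic terms $-\mu_2\gamma^2\bigl[A_1\cdot\alpha(S_1)-A_2\cdot\alpha(S_2)\bigr]$ with $A_i := \frac{1}{|V|}\int_V vF_i\,dv - vF_i$. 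The loss coefficient $K$ is strictly positive, which is what makes the scheme run; it is essential \emph{not} to absorb the bounded but sign-indefinite factor $\mu_2\gamma^2 v\cdot\alpha(S_1)$ into $K$, but to keep it inside $R$, so that the characteristic exponential stays $\le 1$.

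The one term needing care is the chemotactic difference, which I would split by adding and subtracting $A_2\cdot\alpha(S_1)$, writing $A_1\cdot\alpha(S_1)-A_2\cdot\alpha(S_2) = (A_1-A_2)\cdot\alpha(S_1) + A_2\cdot\bigl(\alpha(S_1)-\alpha(S_2)\bigr)$. Since $A_1-A_2 = \frac{1}{|V|}\int_V vF_{12}\,dv - vF_{12}$ and $V\subset B_\nu$, the first piece is bounded by $2\nu\alpha_\infty\|F_{12}(s,\cdot,\cdot)\|_{L^\infty_{x,v}}$ using \eqref{(H)}. For the second piece, $|A_2|\le 2\nu\|F_2(s,\cdot,\cdot)\|_{L^\infty_{x,v}}$ and, by the Lipschitz bound in \eqref{(H)} together with $S_1-S_2 = \int_V(g_1-g_2)\,dv$, $\|\alpha(S_1)-\alpha(S_2)\|\le L_\alpha|V|\,\|g_1-g_2\|_{X_T}$; here I use that $\|F_2\|_{X_T}=\|\mathcal{F}_2(g_2)\|_{X_T}\le C\|f^0\|_{L^\infty_{x,v}}$ with $C$ \emph{independent of} $T$, which is obtained by repeating verbatim the argument proving estimate \eqref{lem 2.1} (that argument only used $\|\alpha(\cdot)\|\le\alpha_\infty$, hence applies unchanged to $\mathcal{F}_2(g_2)$). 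Combined with the elementary bounds on the gain and drift terms (each proportional to $\|F_{12}(s,\cdot,\cdot)\|_{L^\infty_{x,v}}$), this gives, with $\widetilde{x}_s = x+(s-t)v$,
\[
|R(s,\widetilde{x}_s,v)| \le C_1\,\|F_{12}(s,\cdot,\cdot)\|_{L^\infty_{x,v}} + C_2\,\|g_1-g_2\|_{X_T},
\]
with $C_1,C_2$ independent of $T$.

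It then remains to integrate along characteristics. Since $K>0$, $e^{(s-t)K}\le 1$ for $0\le s\le t\le T$, so from $F_{12}(t,x,v)=\int_0^t e^{(s-t)K}R(s,\widetilde{x}_s,v)\,ds$ and the previous bound,
\[
\|F_{12}(t,\cdot,\cdot)\|_{L^\infty_{x,v}} \le C_1\int_0^t \|F_{12}(s,\cdot,\cdot)\|_{L^\infty_{x,v}}\,ds + C_2\,T\,\|g_1-g_2\|_{X_T}.
\]
Gronwall's inequality yields $\|F_{12}(t,\cdot,\cdot)\|_{L^\infty_{x,v}}\le C_2\,T\,e^{C_1T}\,\|g_1-g_2\|_{X_T}$ for all $t\le T$, hence \eqref{2.40} holds with $C_2(T):=C_2\,T\,e^{C_1T}$, which is $<1$ once $T$ is small enough. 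The only genuine obstacle is the chemotactic coupling, and it is twofold: one must place the zeroth-order factor $\mu_2\gamma^2 v\cdot\alpha(S_1)$ in the source term rather than in the exponential weight, and one needs the time-uniform a priori bound on $\mathcal{F}_2(g_2)$ (supplied by the proof of \eqref{lem 2.1}) to close the Lipschitz estimate for the $\alpha(S_1)-\alpha(S_2)$ contribution; everything else is the routine Gronwall argument of Lemma~\ref{lemma2}.
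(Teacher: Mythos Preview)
Your proof is correct and follows the same line as the paper's: subtract the two equations, integrate along characteristics, split the chemotactic coupling $A_1\cdot\alpha(S_1)-A_2\cdot\alpha(S_2)$ by adding and subtracting $A_2\cdot\alpha(S_1)$, invoke the a~priori bound \eqref{lem 2.1} on $\mathcal{F}_2(g_2)$ to control the $\alpha(S_1)-\alpha(S_2)$ piece, and close with Gronwall. The only difference is cosmetic and concerns where the term $\mu_2\gamma^2\,v\cdot\alpha(S_1)\,F_{12}$ is placed: you keep it in the source $R$ so that $K=\mu_0/\varepsilon+\mu_1>0$ and $e^{(s-t)K}\le 1$, whereas the paper absorbs it into $K$ and simply bounds the characteristic exponential by $e^{C_1T}$. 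Both choices lead to a final constant of the form $C\,T\,e^{CT}$, so your claim that your placement is ``essential'' is overstated---the paper's choice works equally well---but this does not affect the validity of your argument.
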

\begin{proof} The proof of Lemma \ref{lemma3} follows the same techniques as in the proof of Lemma \ref{lemma2}, but with more technical difficulties. To begin with we set $F_{12}=\mathcal{F}_2(g_1)-\mathcal{F}_2(g_2)$, then we have
\begin{eqnarray}\label{2.41}
&& \partial_tF_{12}+v \cdot \nabla_x F_{12}  =  \dfrac{\mu_0+\varepsilon\mu_1}{\varepsilon |V|}\int_V F_{12}dv+\dfrac{\mu_0 \gamma^2}{\varepsilon |V|}\int_V vF_{12}dv \cdot v \nonumber\\
& & -\dfrac{\mu_2 \gamma^2}{|V|}\int_V vF_{12}dv \cdot \alpha(S_1)  -\dfrac{\mu_2 \gamma^2}{|V|}\int_V \mathcal{F}_2(g_2)dv \cdot \left(\alpha(S_1)-\alpha(S_2)\right)\\
& &-\left(\dfrac{\mu_0}{\varepsilon}+\dfrac{\mu_1}{|V|}\right)F_{12} + \mu_2 \gamma^2 vF_{12} \cdot \alpha(S_1)+ \mu_2 \gamma^2 v\mathcal{F}_2(g_2) \cdot \left(\alpha(S_1)-\alpha(S_2)\right),\nonumber
\end{eqnarray}
with $S_i(t,x)=\int_Vg_i(t,x,v)dv,\; i=1,2$. We introduce the following notations
\begin{equation*}
K=\dfrac{\mu_0}{\varepsilon}+\dfrac{\mu_1}{|V|}-\mu_2 \gamma^2 v \cdot \alpha(S_1),
\end{equation*}
and
\begin{eqnarray*}
R_1 & = & \dfrac{\mu_0+\varepsilon \mu_1}{\varepsilon |V|}\int_V F_{12} dv +\dfrac{\mu_0 \gamma^2}{\varepsilon|V|}  \int_V vF_{12} dv \cdot v -\dfrac{\mu_2 \gamma^2}{|V|}  \int_V vF_{12} dv \cdot \alpha(S_1)\\
& &  -\dfrac{\mu_2 \gamma^2}{|V|}\int_V \mathcal{F}_2(g_2)dv \cdot \left(\alpha(S_1) -\alpha(S_2)\right)+ \mu_2 \gamma^2 v \mathcal{F}_2(g_2) \cdot \left(\alpha(S_1)-\alpha(S_2)\right).
\end{eqnarray*}
In this way we can write identity (\ref{2.41}) as
\begin{equation}\label{2.42}
\partial_tF_{12}+v \cdot \nabla_x F_{12}+ K F_{12} = R_1.
\end{equation}
A simple calculation shows that
\begin{equation}\label{2.43}
F_{12}(t,x,v)=\int_0^t \left[\exp\left(\int_t^s K(\tau,\widetilde{x}_\tau, v)d\tau  \right)R_1(s,\widetilde{x}_s,v)   \right]ds,
\end{equation}
and in view of estimate $\exp\left(\int_t^s K(\tau,\widetilde{x}_\tau, v)d\tau\right) \leq e^{C_1 T}$, we deduce from (\ref{2.43}) that
\begin{equation}\label{2.44}
|F_{12}(t,x,v)|\leq e^{C_1 T}\int_0^t|R_1(s,\widetilde{x}_s,v)| ds.
\end{equation}
Moreover, it is easy to see that
\begin{equation}\label{2.45}
\begin{split}
|R_1(s,\widetilde{x}_s,v)|  \leq & C_2 n_{12}(s,\widetilde{x}_s)+C_3 n_2(s,\widetilde{x}_s) \left| \alpha(S_1(s,\widetilde{x}_s))-\alpha(S_2(s,\widetilde{x}_s)) \right|  \nonumber\\
&  +C_4 \mathcal{F}_2(g_2)(s,\widetilde{x}_s,v)\left| \alpha(S_1(s,\widetilde{x}_s))-\alpha(S_2(s,\widetilde{x}_s)) \right|,
\end{split}
\end{equation}
with the notation $n_{12}(t,x)=\int_V F_{12}(t,x,v) dv$ and $n_2(t,x)=\int_V\mathcal{F}_2(g_2)(t,x,v) dv$. Using Lemma \ref{lemma1} together with the assumption \eqref{(H)}, we get
 \begin{eqnarray}\label{2.46}
|R_1(s,\widetilde{x}_s,v)| & \leq & C_2 |n_{12}(s,\widetilde{x}_x)|+C_5 \|f^0\|_{L^\infty_{x,v}}\, L_\alpha \| S_1-S_2\|_{L^\infty_{t,x}} 
\end{eqnarray}
We remark that
\begin{equation}\label{2.47}
|n_{12}(s,\widetilde{x}_x)|\leq |V| \|F_{12}(s,\cdot,\cdot)\|_{L^\infty_{x,v}},
\end{equation}
and
\begin{equation}\label{2.48}
\| S_1 - S_2 \|_{L^\infty_{t,x}} \leq |V| \|g_1-g_2\|_{X_T}.
\end{equation}
Then from (\ref{2.44}), (\ref{2.46}), (\ref{2.47}) and (\ref{2.48}) it follows that
\begin{equation}\label{2.49}
\|F_{12}(t,\cdot,\cdot)\|_{L^\infty_{x,v}}\leq e^{C_1 T}\int_0^t \|F_{12}(s,\cdot,\cdot)\|_{L^\infty_{x,v}} ds+TC_3 e^{C_1 T}\|g_1-g_2\|_{X_T},
\end{equation}
and we conclude the proof of Lemma \ref{lemma3} using Gronwall inequality.
\end{proof}
The local existence in Theorem \ref{theorem1} follows from a direct application of the Banach fixed point theorem since $\mathcal{F}$ is a contraction on $X_T$ for $T$ small enough.
This gives existence of a unique solution on $[0,T]$ for small enough $T$.
Thanks to a priori estimates in Lemma \ref{lemma1} we may iterate this process to extend the solution on $[T,2T]$, then on $[2T,3T]$, ... It concludes the proof of Theorem \ref{theorem1}.
\qed

\section{Hyperbolic limit}\label{sec:hyp}
Derivation of macroscopic model from the underlaying description at the microscopic scale, provided by the kinetic theory of active particles, is the subject of a growing literature. In \cite{[CMPS04],Hwang05,Hwang06,[BBNS12],Si14,Liao15} it has been proved that the Keller-Segel \cite{[BBTW15]} model can be derived as the limit of a kinetic model by using a moment method.
The hyperbolic limit is considered in \cite{[FLP05],[BBNS07],NoDEA} leading to the same kind of macroscopic model with small diffusion. More recently these results have been extended in \cite{[OVAK16]} dealing with the coupled kinetic system \eqref{cks}. As a consequence a formal derivation of a class of hyperbolic equations of Cattaneo type is obtained.
The aim of this section is to purpose a rigorous proof of the formal derivation of the hyperbolic limit performed in \cite{[OVAK16]}.
However, due to technical difficulties, we restrict ourself to the one dimensional case, $d=1$. 

The main result can be stated as follows.
\begin{theorem}\label{HLRC}
  Let $T>0$, $d=1$, and $V$ a symmetric bounded domain of $\R$ with $\gamma^2 = |V|\left(\int_V v^2\,dv\right)^{-1}$.
  Let $(f^0,g^0)\in (L^1_{x,v}\cap L^\infty_{x,v})^2$ be nonnegative and assume that $\alpha\in C^1(\mathbb{R})$ satisfies \eqref{(H)}. 
Let $(f_\varepsilon,g_\varepsilon) $ be the unique nonnegative weak solution of the scaled Cauchy problem \eqref{cks} on $[0,T]$. Then there exists a subsequence, denoted in the same way, and a couple $(f,g)$ such that
\begin{equation}
 f_\varepsilon \rightharpoonup f, \quad  g_\varepsilon \rightharpoonup g \quad \text{in}\  L^2_{t, x,v}.
\end{equation}
In addition, the moments
\begin{equation}
n=\int_V f(v) \,dv, \quad  S=\int_V g(v) \,dv, \quad J=\int_V vf(v) \,dv,
\end{equation}
satisfy the following macroscopic system
\begin{equation}\label{systlim}
\begin{cases}
\partial_t n + \partial_x J =0\vspace{0.25cm}\\
\partial_t J + \frac{1}{\gamma^2} \partial_x n = -\mu_1 J + \mu_2 n \, \alpha(S)\vspace{0.25cm}\\
\partial_t g + v\partial_x g = \sigma \left(\frac{S}{|V|}-g\right) + a n - b S.
\end{cases}
\end{equation}
Moreover, the asymptotic limit $f$ satisfies
\begin{equation}
f=\frac{1}{|V|} \left(n+ \gamma^2 J v\right).
\end{equation}
\end{theorem}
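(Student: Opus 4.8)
The plan is to derive the limit system by a compactness argument combined with an averaging lemma, exploiting the uniform bounds provided by Lemma~\ref{lemma1}. First I would record the $\varepsilon$-uniform a priori estimates: by Lemma~\ref{lemma1}, $\|f_\varepsilon\|_{X_T} + \|n_\varepsilon\|_{L^\infty_{t,x}} \le C_1\|f^0\|_{L^\infty_{x,v}}$ and the analogous bound for $g_\varepsilon,S_\varepsilon$, while the $L^1$ conservation/growth identities give control of the mass. Since $V$ is bounded, these $L^\infty$ bounds together with the $L^1$ bounds yield, on the finite time interval $[0,T]$ and for $x$ in any bounded set (or globally, using the $L^2$ interpolation between $L^1$ and $L^\infty$), uniform bounds for $f_\varepsilon, g_\varepsilon$ in $L^2_{t,x,v}$, hence also for the moments $n_\varepsilon, J_\varepsilon, S_\varepsilon$ in $L^2_{t,x}$. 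Up to extraction we obtain weak limits $f_\varepsilon \rightharpoonup f$, $g_\varepsilon \rightharpoonup g$ in $L^2_{t,x,v}$, and correspondingly $n_\varepsilon \rightharpoonup n$, $J_\varepsilon \rightharpoonup J$, $S_\varepsilon \rightharpoonup S$ in $L^2_{t,x}$ (weak convergence of the moments follows because integration against $1$ and $v$ over the bounded set $V$ is continuous for weak $L^2$ convergence).

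Next I would identify the limit profile of $f$. Multiplying the first equation of~\eqref{cks} by $\varepsilon$ and passing to the limit in the distributional sense, every term except $\mu_0(F_{J_\varepsilon}-f_\varepsilon)$ carries a factor $\varepsilon$ and drops out, since all terms are bounded in $L^2_{t,x,v}$; this forces $f = \lim F_{J_\varepsilon} = \frac{1}{|V|}(n + \gamma^2 Jv)$, using the explicit formula~\eqref{defFJ} and the linearity of $F_{J_\varepsilon}$ in $(n_\varepsilon, J_\varepsilon)$ together with the weak convergence of the moments. In particular $\langle f\rangle = n$ and $\int_V v f\,dv = J$ are automatically consistent. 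The continuity equation $\partial_t n + \partial_x J = 0$ then comes from integrating the first equation of~\eqref{cks} over $v\in V$: the entire right-hand side integrates to zero in $v$ (the operator $L$ conserves mass), so $\partial_t n_\varepsilon + \partial_x J_\varepsilon = 0$ exactly for every $\varepsilon$, and we pass to the weak limit. For the momentum equation I would multiply the first equation by $v$ and integrate over $V$; this produces $\partial_t J_\varepsilon + \partial_x \int_V v^2 f_\varepsilon\,dv = -\mu_1 J_\varepsilon + \mu_2 \gamma^2 \int_V v^2 f_\varepsilon\,dv\cdot\alpha(S_\varepsilon) + (\text{terms in }\varepsilon^{-1}\text{ that cancel})$, where the $\mu_0/\varepsilon$ contribution vanishes because $\int_V v(F_{J_\varepsilon}-f_\varepsilon)\,dv = 0$ by the moment identities following~\eqref{defFJ}. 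The flux of the stress tensor requires knowing $\int_V v^2 f_\varepsilon\,dv$; in the limit, using $f = \frac{1}{|V|}(n+\gamma^2 Jv)$ and $\gamma^2\int_V v^2\,dv = |V|$ in dimension $1$, we get $\int_V v^2 f\,dv = \frac{1}{|V|}\cdot n\int_V v^2\,dv = \frac{n}{\gamma^2}$ (the $Jv$ term integrates to zero by symmetry of $V$), yielding exactly $\partial_t J + \frac{1}{\gamma^2}\partial_x n = -\mu_1 J + \mu_2 n\,\alpha(S)$. The third equation of~\eqref{systlim} is just the weak limit of the second equation of~\eqref{cks}, linear in $g_\varepsilon$ and $S_\varepsilon$.

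The main obstacle is the passage to the limit in the two \emph{nonlinear} terms involving $\alpha(S_\varepsilon)$, namely $\mu_2 n_\varepsilon\,\alpha(S_\varepsilon)$ in the momentum equation and $\int_V v f_\varepsilon\,dv\cdot\alpha(S_\varepsilon)$: weak convergence alone does not pass to the limit in products. Here I would invoke strong compactness of $S_\varepsilon$ (or of $g_\varepsilon$) via an averaging lemma applied to the $g$-equation in~\eqref{cks}: the right-hand side of the second equation is bounded in $L^2_{t,x,v}$ uniformly in $\varepsilon$ (it involves only $S_\varepsilon$, $n_\varepsilon$, $g_\varepsilon$, all bounded), and the equation has \emph{no} $1/\varepsilon$ singularity, so $g_\varepsilon$ solves a transport equation with an $L^2$-bounded source; velocity averaging then gives that $S_\varepsilon = \int_V g_\varepsilon\,dv$ is relatively compact in $L^2_{t,x,\mathrm{loc}}$, hence $S_\varepsilon \to S$ strongly along a subsequence (and a.e.). Since $\alpha$ is continuous and bounded, dominated convergence gives $\alpha(S_\varepsilon)\to\alpha(S)$ strongly in every $L^p_{\mathrm{loc}}$, $p<\infty$; combined with the weak $L^2$ convergence of $n_\varepsilon$ and of $\int_V v f_\varepsilon\,dv$, the products converge weakly to $n\,\alpha(S)$ and $J\cdot\alpha(S)$ respectively. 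This closes the system. Two technical points I would handle carefully: first, obtaining $\nabla_x$-regularity needed to apply Lemma~\ref{lemma1}'s derivative estimates and the averaging lemma may require a preliminary regularization/mollification of the initial data and a stability passage to the limit; second, the local-in-$x$ nature of averaging compactness means I would either argue on an exhaustion of $\mathbb{R}^d$ by balls or use the uniform $L^1\cap L^\infty$ bound to upgrade to global $L^2_{\mathrm{loc}}$ convergence, which suffices for identifying the distributional limit system~\eqref{systlim}.
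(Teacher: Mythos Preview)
Your overall architecture matches the paper's: uniform $L^2$ bounds, weak compactness of $(f_\varepsilon,g_\varepsilon)$, averaging lemma on the $g$-equation to get strong compactness of $S_\varepsilon$, then pass to the limit in the moment equations and identify $f$ by multiplying the $f$-equation by $\varepsilon$. The handling of the nonlinear product $\alpha(S_\varepsilon)\int_V v^2 f_\varepsilon$ and the identification $\int_V v^2 f = n/\gamma^2$ are exactly as in the paper.

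The genuine gap is in your first step. You invoke Lemma~\ref{lemma1} to assert $\varepsilon$-uniform $L^\infty$ bounds on $f_\varepsilon$, and then interpolate with the $L^1$ mass bound to get uniform $L^2$ bounds. But Lemma~\ref{lemma1} does \emph{not} give $\varepsilon$-uniform constants: in its proof the damping coefficient is $K=\mu_0/\varepsilon+\mu_1-\mu_2\gamma^2 v\cdot\alpha(S)$ and the source $R_1$ contains $\frac{\mu_0}{\varepsilon}F_J$; the constants $C_2,C_3,C_4$ produced there carry factors of $\mu_0/\varepsilon$ and blow up as $\varepsilon\to 0$. So the interpolation argument, as written, does not yield uniform $L^2$ bounds, and without those you cannot extract a weakly convergent subsequence.

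The paper bypasses this by proving a direct $L^2$ energy estimate (Lemma~\ref{L2-a-priori estimate}) that is uniform in $\varepsilon$. The point is that after multiplying the $f$-equation by $f_\varepsilon$ and integrating in $v$, the singular term
\[
\frac{\mu_0}{\varepsilon}\left[\frac{1}{|V|}\big(n_\varepsilon^2+\gamma^2 J_\varepsilon^2\big)-\int_V f_\varepsilon^2\,dv\right]
\]
has a sign: decomposing $f_\varepsilon=f_\varepsilon^S+f_\varepsilon^A$ into symmetric and antisymmetric parts (with respect to $v\mapsto -v$) and using Cauchy--Schwarz together with $\gamma^2\int_V v^2\,dv=|V|$ shows this bracket is $\le 0$, so the $1/\varepsilon$ contribution can be dropped. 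A Gronwall then closes the estimate with constants independent of $\varepsilon$. This structural observation is the missing idea in your plan; once you have it, the rest of your argument goes through essentially unchanged. As a side remark, the averaging lemma needs no $\nabla_x$-regularity of the data---an $L^2$ source on the right of the transport equation suffices---so the regularization step you mention is unnecessary.
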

The first two equations in system \eqref{systlim} form the so-called Cattaneo system for chemosensitive movement \cite{Dolak,Hillen}. 
Hence a direct consequence of this Theorem (and Theorem \ref{theorem1}) is the existence of a solution for the one dimensional Cattaneo system.

Since the last equation has not been rescaled, it cannot be rewritten as a closed system with macroscopic variable. However, we deduce from the last equation in \eqref{systlim} that the moments $S=\langle g \rangle$ and $q=\langle v g \rangle$ verify the (non-closed) system
\begin{align*}
\partial_t S + \partial_x q =an-b S, \qquad\
\partial_t q + \partial_x Q(g) = -\sigma q,
\end{align*}
where the second order moment $Q$ is defined by
$Q(g) = \int_V v^2 g(v) dv.$

\subsection{Uniform a priori estimates}

We start with the following a priori estimates uniform with respect to $\varepsilon>0$:
\begin{lemma}[A priori estimate in $L^2_{x,v}$]\label{L2-a-priori estimate}
We suppose that we are in the conditions of theorem \ref{HLRC}. Then the following estimate
\begin{equation}
 \| f_\varepsilon(t)\|^2_{L^2_{x,v}}+ \|g_\varepsilon(t)\|^2_{L^2_{x,v}} \leq C(T) \left( \|f^0\|^2_{L^2_{x,v}}+ \|g^0\|^2_{L^2_{x,v}} \right),
\end{equation}
holds true for a.e $t \in (0,T)$, where the constant $C(T)$ is independent of $\varepsilon$.
\end{lemma}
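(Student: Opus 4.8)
The plan is to run $L^2$ energy estimates on each equation of \eqref{cks}, exploiting the fact that the relaxation operators on the right-hand sides are, up to a constant, $L^2(V)$-orthogonal projections, and that the $\varepsilon$-singular term carries a favourable sign. The key observation is that, with the normalization $\gamma^2\int_V v\otimes v\,dv=|V|I_d$ of \eqref{T0gamma}, the family $\{|V|^{-1/2},\gamma|V|^{-1/2}v_1,\dots,\gamma|V|^{-1/2}v_d\}$ is orthonormal in $L^2(V)$, so the map $f_\varepsilon\mapsto F_{J_\varepsilon}$ of \eqref{defFJ} is precisely the orthogonal projection of $f_\varepsilon$ onto $\mathrm{span}\{1,v_1,\dots,v_d\}$, and $f_\varepsilon\mapsto n_\varepsilon/|V|$, $g_\varepsilon\mapsto S_\varepsilon/|V|$ are the projections onto the constants. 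Consequently
\[
\int_V(F_{J_\varepsilon}-f_\varepsilon)f_\varepsilon\,dv=-\|F_{J_\varepsilon}-f_\varepsilon\|_{L^2(V)}^2\le 0,\qquad
\int_V\Big(\tfrac{n_\varepsilon}{|V|}-f_\varepsilon\Big)f_\varepsilon\,dv\le 0,\qquad
\int_V\Big(\tfrac{S_\varepsilon}{|V|}-g_\varepsilon\Big)g_\varepsilon\,dv\le 0 .
\]

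First I would multiply the $f_\varepsilon$-equation by $f_\varepsilon$ and integrate over $\mathbb{R}^d\times V$. The transport term $\tfrac12\int v\cdot\nabla_x(f_\varepsilon^2)$ vanishes after integration by parts (the boundary term at infinity is discarded using the $L^1\cap L^\infty$ bounds of Theorem \ref{theorem1}, which in particular ensure $f_\varepsilon(t)\in L^2_{x,v}$). By the sign observations above, the $\mu_0/\varepsilon$ and $\mu_1$ contributions are nonpositive and are simply dropped; in particular the $\varepsilon$-singular term disappears. For the chemotactic term $-\mu_2\gamma^2\int(\tfrac{J_\varepsilon}{|V|}-vf_\varepsilon)\cdot\alpha(S_\varepsilon)f_\varepsilon$ I would use $\|\alpha\|_\infty\le\alpha_\infty$, $|v|\le\nu$ on $V$, and the Cauchy--Schwarz bounds $|n_\varepsilon(t,x)|\le|V|^{1/2}\|f_\varepsilon(t,x,\cdot)\|_{L^2(V)}$ and $|J_\varepsilon(t,x)|\le\nu|V|^{1/2}\|f_\varepsilon(t,x,\cdot)\|_{L^2(V)}$ to obtain a bound of the form $C\|f_\varepsilon(t)\|_{L^2_{x,v}}^2$ with $C=C(\mu_2,\gamma,\alpha_\infty,\nu)$ independent of $\varepsilon$. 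This gives $\tfrac{d}{dt}\|f_\varepsilon(t)\|_{L^2_{x,v}}^2\le C\|f_\varepsilon(t)\|_{L^2_{x,v}}^2$, and Gronwall's inequality yields $\|f_\varepsilon(t)\|_{L^2_{x,v}}^2\le e^{CT}\|f^0\|_{L^2_{x,v}}^2$ uniformly in $\varepsilon$; note that this estimate closes by itself, without using the second equation.

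Next I would treat the second equation similarly, multiplying by $g_\varepsilon$: the transport term again vanishes, the $\sigma$-relaxation term is nonpositive, so it remains to control $\int(an_\varepsilon-bS_\varepsilon)g_\varepsilon$. Since $an_\varepsilon-bS_\varepsilon$ is independent of $v$ and $\int_V g_\varepsilon\,dv=S_\varepsilon$, this equals $\int_{\mathbb{R}^d}(an_\varepsilon-bS_\varepsilon)S_\varepsilon\,dx\le a\int_{\mathbb{R}^d}n_\varepsilon S_\varepsilon\,dx$, which by Young's inequality together with $\|n_\varepsilon\|_{L^2_x}\le|V|^{1/2}\|f_\varepsilon\|_{L^2_{x,v}}$ and $\|S_\varepsilon\|_{L^2_x}\le|V|^{1/2}\|g_\varepsilon\|_{L^2_{x,v}}$ is bounded by $C(\|f_\varepsilon(t)\|_{L^2_{x,v}}^2+\|g_\varepsilon(t)\|_{L^2_{x,v}}^2)$. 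Inserting the uniform bound on $\|f_\varepsilon(t)\|_{L^2_{x,v}}^2$ from the previous step and applying Gronwall once more gives $\|g_\varepsilon(t)\|_{L^2_{x,v}}^2\le C(T)(\|f^0\|_{L^2_{x,v}}^2+\|g^0\|_{L^2_{x,v}}^2)$ with $C(T)$ independent of $\varepsilon$; adding the two bounds proves the lemma.

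A word on rigor: the manipulations above are formal, since they require $\partial_t f_\varepsilon\in L^2$ and integration by parts in $x$. I would justify them in the standard way — either by running the computation on a smooth approximating sequence (regularize the data in $W^{1,\infty}_{x,v}$ so that Lemma \ref{lemma1} supplies the needed regularity and hence the exact energy identity, then pass to the limit by density), or by testing the weak formulation against a mollified, spatially truncated version of $f_\varepsilon$ and then removing the truncation, the $L^1\cap L^\infty$ bounds ensuring that the boundary terms at infinity vanish. The only subtlety is to keep every constant independent of $\varepsilon$, which is automatic here because the sole $\varepsilon$-dependent term is precisely the nonpositive one that gets discarded. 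I expect this justification — rather than the energy computation itself — to be the main technical point, although it is entirely routine.
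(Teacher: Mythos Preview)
Your proposal is correct and follows essentially the same strategy as the paper: multiply each equation by the unknown, integrate, show the relaxation terms carry a good sign so that the $\varepsilon$-singular contribution drops out, bound the remaining zero-order terms by $C\|f_\varepsilon\|_{L^2_{x,v}}^2$ (respectively $C(\|f_\varepsilon\|^2+\|g_\varepsilon\|^2)$), and close with Gronwall.

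The only notable difference is how the sign of the $\mu_0/\varepsilon$ term is obtained. The paper works in $d=1$ and splits $f_\varepsilon=f_\varepsilon^S+f_\varepsilon^A$ into its symmetric and antisymmetric parts in $v$, so that $n_\varepsilon=\int_V f_\varepsilon^S$ and $J_\varepsilon=\int_V v f_\varepsilon^A$; Cauchy--Schwarz on each piece then yields $\tfrac{1}{|V|}(n_\varepsilon^2+\gamma^2 J_\varepsilon^2)\le\int_V f_\varepsilon^2$, i.e.\ the $\mu_0/\varepsilon$ bracket is nonpositive. Your observation that, thanks to the normalization \eqref{T0gamma}, the family $\{|V|^{-1/2},\gamma|V|^{-1/2}v_1,\dots,\gamma|V|^{-1/2}v_d\}$ is orthonormal in $L^2(V)$ and hence $f_\varepsilon\mapsto F_{J_\varepsilon}$ is the orthogonal projection onto $\mathrm{span}\{1,v\}$ is a cleaner and dimension-independent way of reaching the same inequality $\int_V(F_{J_\varepsilon}-f_\varepsilon)f_\varepsilon\,dv\le 0$. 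Both routes are equivalent here; yours has the advantage of working verbatim for general $d$, while the paper's symmetric/antisymmetric split is tied to $d=1$. The treatment of the $\mu_1$, $\sigma$, chemotactic, and source terms is the same in both proofs.
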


\begin{proof}
We multiply the first equation of system \eqref{cks} by $f_\varepsilon$
\begin{eqnarray*}
\dfrac{1}{2}\left(\partial_t f^2_\varepsilon + v\partial_x f^2_\varepsilon \right) &=&  \dfrac{\mu_0}{\varepsilon}\left[ \dfrac{1}{|V|}\left(n_\varepsilon f_\varepsilon + J_\varepsilon\gamma^2 v f_\varepsilon \right)-f^2_\varepsilon \right]+\mu_1 \left(\dfrac{n_\varepsilon}{|V|}f_\varepsilon-f^2_\varepsilon\right)\\
&&- \mu_2 \gamma^2\left(\dfrac{J_\varepsilon f_\varepsilon}{|V|}-vf^2_\varepsilon \right)\alpha(S_\varepsilon),
\end{eqnarray*}
and integrate over $V$ to obtain
\begin{eqnarray}\label{Lemma16-Eq1}
\displaystyle \dfrac{1}{2}\left(\partial_t \int_V f^2_\varepsilon dv + \partial_x \int_V v f^2_\varepsilon  dv\right) =  \dfrac{\mu_0}{\varepsilon}\left[ \dfrac{1}{|V|}\left(n^2_\varepsilon  + J^2_\varepsilon \gamma^2 \right)-\int_V f^2_\varepsilon dv \right] &&\nonumber\\
\displaystyle  +\mu_1 \left(\dfrac{n^2_\varepsilon}{|V|}-\int_V f^2_\varepsilon dv \right)- \mu_2 \gamma^2\left(\dfrac{J_\varepsilon n_\varepsilon}{|V|}-\int_V vf^2_\varepsilon dv \right) \alpha(S_\varepsilon).&&
\end{eqnarray}
Let us introduce the symmetric and the anti-symmetric part of $f_\varepsilon$ as follows
\begin{eqnarray*}
f_\varepsilon^S(v)& = & \dfrac{1}{2}\left(f_\varepsilon(v) + f_\varepsilon(-v)\right), \quad v\in V,  \\
f_\varepsilon^A(v)& = & \dfrac{1}{2}\left(f_\varepsilon(v) - f_\varepsilon(-v)\right), \quad v\in V.
\end{eqnarray*}
Since $V$ is symmetric, it follows that
\begin{equation}\label{Lamma16-Eq2}
f_\varepsilon = f_\varepsilon^S + f_\varepsilon^A, \quad  n_\varepsilon=\int_V f^S_\varepsilon dv, \quad J_\varepsilon = \int_V v f^A_\varepsilon dv,
\end{equation}
and
\begin{equation}\label{Lemma16-Eq3}
\int_V f_\varepsilon^2 dv  = \int_V \left(f^S_\varepsilon\right)^2 dv+ \int_V \left(f^A_\varepsilon\right)^2 dv.
\end{equation}
Using \eqref{Lemma16-Eq1}-\eqref{Lemma16-Eq3}, we have
\begin{equation}\label{Lemma16-Eq4}
\begin{split}
\dfrac{1}{2}\Bigg(\partial_t \int_V f^2_\varepsilon dv + \partial_x & \int_V v f^2_\varepsilon  dv\Bigg) = \dfrac{\mu_0}{\varepsilon}\Bigg[ \dfrac{1}{|V|}\left( \int_V f^S_\varepsilon dv  \right)^2 -\int_V \left(f^S_\varepsilon\right)^2 dv\\
&+\dfrac{\gamma^2}{|V|}\left( \int_V v f^A_\varepsilon dv  \right)^2 -\int_V \left(f^A_\varepsilon\right)^2 dv \Bigg]\\
&+\mu_1 \left[\dfrac{n^2_\varepsilon}{|V|}-\int_V f^2_\varepsilon dv \right] - \mu_2 \gamma^2\left(\dfrac{J_\varepsilon n_\varepsilon}{|V|}-\int_V vf^2_\varepsilon dv \right)\alpha(S_\varepsilon),
\end{split}
\end{equation}
and according to Cauchy-Schwarz inequality we have
\begin{equation}\label{Lemma16-Eq5}
\left( \int_V f^S_\varepsilon dv  \right)^2  \leq  |V| \int_V \left(f^S_\varepsilon\right)^2 dv, \quad
\left( \int_V v f^A_\varepsilon dv  \right)^2 \leq \frac{|V|}{\gamma^2} \int_V \left(f^A_\varepsilon\right)^2 dv.
\end{equation}
By combining equations \eqref{Lemma16-Eq4} and \eqref{Lemma16-Eq5} we get
\begin{equation}\label{Lemma16-Eq6}
\dfrac{1}{2}\left( \partial_t \int_V f^2_\varepsilon dv + \partial_x \int_V v f^2_\varepsilon dv \right) \leq - \mu_2 \gamma^2 \left( \dfrac{J_\varepsilon n_\varepsilon}{|V|}-\int_V v f^2_\varepsilon dv \right)\alpha (S_\varepsilon).
\end{equation}
Moreover, we have
\begin{eqnarray*}
- \mu_2 \gamma^2 \left( \dfrac{J_\varepsilon n_\varepsilon}{|V|}-\int_V v f^2_\varepsilon dv \right)\alpha (S_\varepsilon) & =& \mu_2\gamma^2 \alpha (S_\varepsilon) \left( \int_V v f^2_\varepsilon dv - \dfrac{J_\varepsilon n_\varepsilon}{|V|} \right)\\
& \leq & \mu_2 \gamma^2 \nu \alpha_\infty \left( \int_V  f^2_\varepsilon dv + \dfrac{ n^2_\varepsilon}{|V|} \right),
\end{eqnarray*}
and using \eqref{Lemma16-Eq5}, we obtain
\begin{eqnarray}\label{Lemma17-Eq7}
- \mu_2 \gamma^2 \left( \dfrac{J_\varepsilon n_\varepsilon}{|V|}-\int_V v f^2_\varepsilon dv \right)\alpha (S_\varepsilon)& \leq & 2 \mu_2 \gamma^2 \nu  \alpha_\infty \int_V  f^2_\varepsilon dv.
\end{eqnarray}
Hence, from \eqref{Lemma16-Eq6} and \eqref{Lemma17-Eq7} we get
\begin{equation*}
\partial_t \int_V  f^2_\varepsilon dv + \partial_x \int_V v f^2_\varepsilon dv \leq C \int_V  f^2_\varepsilon dv,
\end{equation*}
and integration over $x\in \mathbb{R}^d$ yields
\begin{equation}\label{Lemma16-Eq8}
\frac{d}{dt} \|f_\varepsilon(t)\|^2_{L^2_{x,v}}\leq C \|f_\varepsilon(t)\|^2_{L^2_{x,v}}.
\end{equation}
To derive a similar  estimate for $g_\varepsilon$ we multiply the second equation of system \eqref{cks} by $g_\varepsilon$ and we integrate over $V$ to obtain
\begin{equation*}
\dfrac{1}{2}\left(\partial_t \int_V g^2_\varepsilon dv + \partial_x \int_V v g^2_\varepsilon dv \right) =  \sigma\left( \dfrac{S^2_\varepsilon}{|V|}-\int_V g^2_\varepsilon dv \right)+ a n_\varepsilon S_\varepsilon - b S^2_\varepsilon.
\end{equation*}
Using the Cauchy-Schwarz inequality we can write
\begin{equation*}
 \dfrac{1}{2}\left(\partial_t \int_V g^2_\varepsilon dv + \partial_x \int_V v g^2_\varepsilon dv \right) \leq \dfrac{a |V|}{2}\int_V f^2_\varepsilon dv + \left(\dfrac{a}{2}+b \right)|V|\int_V g^2_\varepsilon dv,
\end{equation*}
and integration over the space variable $x \in \mathbb{R}$ gives
\begin{equation}\label{Lemma16-Eq9}
\frac{d}{dt} \|g_\varepsilon(t)\|^2_{L^2_{x,v}} \leq a|V| \|f_\varepsilon(t)\|^2_{L^2_{x,v}} + (a+2b)|V|\|g_\varepsilon(t)\|^2_{L^2_{x,v}}.
\end{equation}
Let us now combine equations \eqref{Lemma16-Eq8} and \eqref{Lemma16-Eq9} to get
$$
\frac{d}{dt} \left[ \|f_\varepsilon (t)\|^2_{L^2_{x,v}} + \|g_\varepsilon (t)\|^2_{L^2_{x,v}} \right] \leq C \left[ \|f_\varepsilon (t)\|^2_{L^2_{x,v}} + \|g_\varepsilon (t)\|^2_{L^2_{x,v}} \right].
$$
We conclude the proof thanks to a Gronwall's inequality.
\end{proof}

\subsection{Convergence by compactness}\label{sec:conv}

According to Lemma \ref{L2-a-priori estimate}, the sequences $f_\varepsilon$, $g_\varepsilon$ are bounded in $L^\infty\left(0,T;L^2_{x,v}\right)$, hence  there are bounded in $L^2_{t,x,v}$. Accordingly, it follows that there exist two subsequences, denoted in the same way, and $f$, $g \in L^2_{t,x,v}$ such that
\begin{equation}
f_\varepsilon \rightharpoonup f, \quad g_\varepsilon \rightharpoonup g \quad \text{in} \;\; L^2_{t,x,v}.
\end{equation}
Moreover, we have
\begin{equation}\label{eq.4.22}
\partial_t g_\varepsilon + v \partial_x g_\varepsilon = \sigma \left( \frac{S_\varepsilon}{\vert V \vert} -g_\varepsilon \right) + a n_\varepsilon - b S_\varepsilon \in L^2_{x,v}.
\end{equation}
Hence, according to averaging Lemma, see for instance \cite{[S09]} Proposition 3.3.1, we have
\begin{equation}
\int_V g_\varepsilon(v) \, dv = S_\varepsilon \mbox{ is uniformly bounded in } L^2\left(0,T; H^{\frac{1}{2}}(\mathbb{R})\right).
\end{equation}

Integrating equation \eqref{eq.4.22} with respect to $v$, we deduce clearly that $\partial_t S_\varepsilon \in L^2\left(0,T;W^{-1,1}(\mathbb{R})\right)$. Moreover, for each compact $K\subset \mathbb{R}$, we have the embeddings (see e.g. \cite{[BCD11]})
\begin{equation}
H^{\frac{1}{2}}(K) \xhookrightarrow[compact]{} L^2(K) \xhookrightarrow[{\color{white}{---}}]{} W^{-1,1}(K).
\end{equation}
From Aubin-Lions compactness Lemma (see \cite{[S85]}), we deduce that the sequence $(S_\varepsilon)_\varepsilon$ is relatively compact in $L^2\left(0,T;L^2(K)\right)$. Hence we can extract a subsequence, still denoted $(S_\varepsilon)_\varepsilon$, which converges strongly towards $S$ in $L^2 \left((0,T)\times K\right)$. By uniqueness of the weak limit, we have that $S = \int_V g(v) dv$. 

However the convergence is global:
\begin{equation}
S_\ve \to S \quad \text{in} \;\; L^2_{t,x}.
\end{equation}
 Indeed, for any compact $[-R,R] \subset\R$ we may extract a subsequence $(S_\ve)_\ve$ such that
$S_\ve \to S$ strongly in $L^2([0,T]\times [-R,R])$, and we know that $S_\ve = \int_V g_\ve(v)\,dv$ where
$$
\partial_t \int_V (f_\ve^2 + g_\ve^2) dv + \partial_x \int_V v (f_\ve^2+g_\ve^2) dv \leq C \int_V (f_\ve^2+g_\ve^2) dv.
$$
Multiplying by a function $x\mapsto\phi(x)\in C^1(\R)$ with bounded derivative and integrating, we deduce
\begin{equation}\label{estimphi1}
\frac{d}{dt} \int_{\R}\int_V (f_\ve^2+g_\ve^2) \phi \,dxdv \leq C \int_{\R}\int_V (f_\ve^2+g_\ve^2) \phi \,dxdv + \int_{\R} \int_V v(f_\ve^2+g_\ve^2)  \phi' \,dxdv.
\end{equation}

In order to pass from local to global convergence, we need to prove that we have a bound on the tail at infinity.
Let us show that $(S_\ve)_\ve$ is a Cauchy sequence in $L^2_{t,x}$.
We compute
$$
\int_0^T \int_{\R} |S_\ve - S_{\ve'}|^2\,dxdt = 
\int_0^T \int_{-R}^R |S_\ve - S_{\ve'}|^2\,dxdt +
\int_0^T \int_{\R\setminus [-R,R]} |S_\ve - S_{\ve'}|^2\,dxdt.
$$
From the above result, we know that the first term of the right hand side 
goes to $0$ as $\ve, \ve' \to 0$. 
For the second term, let us consider $\phi\in C^\infty(\R^d)$ such that $0\leq \phi\leq 1$, $\phi(x)=0$ for $|x|\leq 1/2$ and $\phi(x)=1$ for $|x|\geq 1$.
We define $\phi_R(x) = \phi(x/R)$.
Then, we have
\begin{align*}
\int_0^T \int_{\R\setminus [-R,R]} |S_\ve - S_{\ve'}|^2\,dxdt & \leq
\int_0^T \int_{\R\setminus [-R,R]} |S_\ve - S_{\ve'}|^2 \phi_R\,dxdt  \\
& \leq 2 \int_0^T \int_{\R} (|S_\ve|^2 + |S_{\ve'}|^2) \phi_R\,dxdt.
\end{align*}
Let us now use estimate \eqref{estimphi1} with $\phi_R$, since $\phi'_R(x) = \frac 1R \phi'(x/R)$, we have
\begin{equation}
\begin{split}
\frac{d}{dt} \int_{\R}\int_V (f_\ve^2+g_\ve^2) \phi_R \,dxdv &\leq C \int_{\R}\int_V (f_\ve^2+g_\ve^2) \phi_R \,dxdv\\
& + \frac{1}{R}\int_{\R} \int_V v(f_\ve^2+g_\ve^2)  \phi'(x/R) \,dxdv.
\end{split}
\end{equation}
Applying a Gronwall Lemma, we deduce that 
$$
\int_{\R}\int_V (f_\ve^2+g_\ve^2) \phi_R \,dxdv \leq e^{CT} 
\left(\int_{\R}\int_V ((f^0)^2+(g^0)^2) \phi_R \,dxdv+ \frac{C\|\phi'\|}{R}\right).
$$
Since the initial data $f^0$ and $g^0$ are given in $L^2_{x,v}$ 
and $\phi_R(x)=0$ on $B_{R/2}$, we deduce that 
the left hand side goes to $0$ as $R\to +\infty$, uniformly with respect to $\ve$.
Thus,
$$
\int_0^T \int_{\R} (|S_\ve|^2 + |S_{\ve'}|^2) \phi_R\,dxdt
\leq |V| \int_0^T\int_{\R}\int_V (f_\ve^2+f_{\ve'}^2) \phi_R \,dxdvdt
$$
goes uniformly to $0$ as $R\to +\infty$.
We conclude that the sequence $(S_\ve)_\ve$ is a Cauchy sequence in $L^2([0,T]\times\R)$.
\qed

\subsection{Proof of Theorem \ref{HLRC}} 

Multiply the first and second equations of system \eqref{cks} by 1 and $v$ respectively, and integrate over $V$ to obtain the following system
\begin{equation}\label{theorem15-Eq1}
\begin{cases}
\displaystyle \partial_t n_\varepsilon + \partial_x J_\varepsilon =0 \vspace{0.25cm}\\
\displaystyle \partial_t J_\varepsilon + \partial_x \int_V v^2 f_\varepsilon \, dv = -\mu_1 J_\varepsilon + \mu_2 \gamma^2 \int_V v^2 f_\varepsilon \,dv\, \alpha(S_\varepsilon)\vspace{0.25cm}\\
\partial_t g_\varepsilon + v\cdot \partial_x g_\varepsilon = \sigma\left(\frac{S_\varepsilon}{|V|}-g_\varepsilon\right)+an_\varepsilon-b S_\varepsilon.
\end{cases}
\end{equation}
We have
\begin{equation}\label{limfg}
f_\varepsilon(t,x,v) \rightharpoonup f(t,x,v) \quad \mbox{ and } \quad
g_\varepsilon(t,x,v) \rightharpoonup g(t,x,v) \quad  \text{in} \;\; L^2_{t,x,v}.
\end{equation}
Therefore, since the set of velocities $V$ is bounded, we deduce
\begin{equation}\label{lim1}
n_\varepsilon(t,x) \rightharpoonup n(t,x), \quad
J_\varepsilon(t,x) \rightharpoonup J(t,x) \quad  \text{in} \;\; L^2_{t,x},
\end{equation}
\begin{equation}\label{lim2}
S_\varepsilon(t,x) \rightarrow S(t,x), \quad
q_\varepsilon(t,x) \rightharpoonup q(t,x) \quad  \text{in} \;\; L^2_{t,x},
\end{equation}
\begin{equation}\label{lim3}
\int_V v^2f_\varepsilon(t,x,v) dv \rightharpoonup \int_V v^2 f(t,x,v) dv \quad  \text{in} \;\; L^2_{t,x},
\end{equation}
\begin{equation}\label{lim4}
\int_V v^2 g_\varepsilon(t,x,v) dv \rightharpoonup \int_V v^2 g(t,x,v) dv \quad  \text{in} \;\; L^2_{t,x},
\end{equation}
when $\varepsilon$ tends to zero. However, according to Section \ref{sec:conv} we have 
\begin{equation}\label{weak-lim5}
 \alpha(S_\varepsilon(t,x)) \, \int_V v^2 f_\varepsilon(t,x,v) \,dv  \rightharpoonup \alpha(S(t,x)) \, \int_V v^2 f(t,x,v) \,dv \quad  \text{in} \;\; L^2_{t,x}.
\end{equation}

Hence, by passing to limit in \eqref{theorem15-Eq1}, in the sense of distributions, and taking into account Eqs. \eqref{lim1}-\eqref{weak-lim5}, it follows that
\begin{equation}\label{macro-2-Q}
\begin{cases}
\displaystyle \partial_t n + \partial_x J =0\\
\displaystyle \partial_t J + \partial_x \int_V v^2 f dv = -\mu_1 J + \mu_2 \gamma^2 \int_V v^2 f dv \, \alpha(S)   \\
\displaystyle \partial_t g + v\cdot \partial_x g = \sigma\left(\frac{S}{|V|}-g\right)+an-b S.
\end{cases}
\end{equation}

To identify the term $\int_V v^2 f(t,x,v) dv$, we multiply the first equation of system \eqref{cks} by $\varepsilon$ to get
\begin{equation}
\begin{split}
\varepsilon \partial_t f_\varepsilon (t,x,v) + \varepsilon v & \cdot \partial_x f_\varepsilon (t,x,v)\\
 = \mu_0 ( F_{n_\varepsilon, J_\varepsilon}(t,x,v) & -f_\varepsilon(t,x,v) )
+ \varepsilon \mu_1 \left( \dfrac{n_\varepsilon (t,x)}{\vert V \vert}- f_\varepsilon (t,x,v) \right)\\
 - \varepsilon & \mu_2 \gamma^2 \left( \dfrac{J_\varepsilon (t,x)}{\vert V \vert} -v f_\varepsilon (t,x,v) \right) \alpha\left(S_\varepsilon (t,x)\right).
\end{split}
\end{equation}
Then, letting $\varepsilon$ go to zero yields
\begin{equation}
f=F_{n,J}(t,x,v)=\frac{1}{|V|} \left(n+ \gamma^2 J v\right)
\end{equation}
and a simple calculations shows that
\begin{equation}
\int_V v^2 f(t,x,v) dv =\int_V v^2 F_{n,J}(t,x,v) dv =\frac{1}{\gamma^2} n(t,x).
\end{equation}
Using this last equation in system \eqref{macro-2-Q} finishes the proof. \qed

\bibliographystyle{plain}
\bibliography{biblio}

\begin{thebibliography}{10}

\bibitem{[BCD11]}
H~Bahouri, J-Y Chemin, and R~Danchin.
\newblock {\em Fourier Analysis and Nonlinear Partial Differential Equations}.
\newblock Springer-Verlag, Berlin, 2011.

\bibitem{[BBC16]}
N~Bellomo, A~Bellouquid, and N~Chouhad.
\newblock From a multiscale derivation of nonlinear cross-diffusion models to
  keller-segel models in a navier-stokes fluid.
\newblock {\em Math. Models Methods Appl. Sci.}, 26:2041--2069, 2016.

\bibitem{[BBNS07]}
N~Bellomo, A~Bellouquid, J~Nieto, and J~Soler.
\newblock Multicellular growing systems: Hyperbolic limits towards macroscopic
  description.
\newblock {\em Math. Models Methods Appl. Sci.}, 17:1675--1693, 2007.

\bibitem{[BBNS12]}
N~Bellomo, A~Bellouquid, J~Nieto, and J~Soler.
\newblock On the asymptotic theory from microscopic to macroscopic growing
  tissue models: an overview with perspectives.
\newblock {\em Math. Models Methods Appl. Sci.}, 22, 2012.

\bibitem{[BBTW15]}
N~Bellomo, A~Bellouquid, Y~Tao, and M~Winkler.
\newblock Toward a mathematical theory of keller-segel models of pattern
  formation in biological tissues.
\newblock {\em Math. Models Methods Appl. Sci.}, 25(9):1663--1763, 2015.

\bibitem{[BC17]}
D~Burini and N~Chouhad.
\newblock An hilbert perturbation method toward a multiscale analysis from
  kinetic to macroscopic models.
\newblock {\em Math. Models Methods Appl. Sci.}, 27(7):1327--1353, 2017.

\bibitem{[CMPS04]}
FACC Chalub, PA~Markowich, B~Perthame, and C~Schmeiser.
\newblock Kinetic models for chemotaxis and their drift-diffusion limits.
\newblock {\em Monatsh. Math.}, 142:123--141, 2004.

\bibitem{[H902]}
Hilbert D.
\newblock Mathematical problems.
\newblock {\em Bull. Amer. Math. Soc.}, 8(10):437--479, 1902.

\bibitem{Dolak}
Y~Dolak and T~Hillen.
\newblock Cattaneo models for chemotaxis, numerical solution and pattern
  formation.
\newblock {\em J. Math. Biol.}, 46:461--478, 2003.

\bibitem{[EH06]}
R~Erban and HJ~Hwang.
\newblock Global existence results for complex hyperbolic models of bacterial
  chemotaxis.
\newblock {\em Disc. Cont. Dyn. Systems-Series B}, 6:1239--1260, 2006.

\bibitem{[FLP05]}
F~Filbet, P~Lauren\c{c}ot, and B~Perthame.
\newblock Derivation of hyperbolic models for chemosensitive movement.
\newblock {\em J. Math. Biol.}, 50:189--207, 2005.

\bibitem{Hwang05}
HJ~Hwang, K~Kang, and A~Stevens.
\newblock Drift-diffusion limits of kinetic models for chemotaxis: a
  generalization.
\newblock {\em Discrete Contin. Dyn. Syst. Ser. B}, 5(2):319--334, 2005.

\bibitem{[HKS]}
HJ~Hwang, K~Kang, and A~Stevens.
\newblock Global solutions of nonlinear transport equations for chemosensitive
  movement.
\newblock {\em SIAM J. Math. Anal.}, 36:1177--1199, 2005.

\bibitem{Hwang06}
HJ~Hwang, K~Kang, and A~Stevens.
\newblock Global existence of classical solutions for a hyperbolic chemotaxis
  model and its parabolic limit.
\newblock {\em Indiana Univ. Math. J.}, 55:289--316, 2006.

\bibitem{Liao15}
Liao J.
\newblock Global solution for a kinetic chemotaxis model with internal dynamics
  and its fast adaptation limit.
\newblock {\em J. Differential Equations}, 259:6432--6458, 2015.

\bibitem{NoDEA}
J~James and N~Vauchelet.
\newblock Chemotaxis: from kinetic equations to aggregate dynamics.
\newblock {\em NoDEA Nonlinear Differential Equations Appl.}, 20(1):101--127,
  2013.

\bibitem{[ODA88]}
HG~Othmer, SR~Dunbar, and W~Alt.
\newblock Models of dispersal in biological systems.
\newblock {\em J. Math. Biol.}, 26:263--298, 1988.

\bibitem{[OVAK16]}
N~Outada, N~Vauchelet, T~Akrid, and M~Khaladi.
\newblock From kinetic theory of multicellular systems to hyperbolic tissue
  equations: Asymptotic limits and computing.
\newblock {\em Math. Models Methods Appl. Sci.}, 26:2709--2734, 2016.

\bibitem{[PE04]}
B~Perthame.
\newblock Mathematical tools for kinetic equations.
\newblock {\em Bull. American Math. Soc.}, 41:205--244, 2004.

\bibitem{[S09]}
L~Saint-Raymond.
\newblock {\em Hydrodynamic limits of the Boltzmann equation}.
\newblock Lecture Notes in Mathematics, Springer, Berlin, 2009.

\bibitem{Si14}
G~Si, M~Tang, and X~Yang.
\newblock A pathway-based mean-field model for e. coli chemotaxis: Mathematical
  derivation and its hyperbolic and parabolic limits.
\newblock {\em Multiscale Model. Simul.}, 12(2), 2014.

\bibitem{[S85]}
J~Simon.
\newblock Compact sets in the space $l^p(0,t;b)$.
\newblock {\em Ann. Mat. Pura Appl.}, 146:65--96, 1987.

\bibitem{Hillen}
Hillen T.
\newblock Hyperbolic models for chemosensitive movement.
\newblock {\em Math. Models Methods Appl. Sci.}, 12:1007--1034, 2002.

\bibitem{[V10]}
N~Vauchelet.
\newblock Numerical simulation of a kinetic model for chemotaxis.
\newblock {\em Kinetic and Related Models}, 3(3):501--528, 2010.

\end{thebibliography}

\end{document}